\definecolor{codegreen}{rgb}{0,0.6,0}
\definecolor{codegray}{rgb}{0.5,0.5,0.5}
\definecolor{codepurple}{rgb}{0.58,0,0.82}
\definecolor{backcolour}{rgb}{0.95,0.95,0.92}
\lstdefinestyle{mystyle}{
  backgroundcolor=\color{backcolour},   commentstyle=\color{codegreen},
  keywordstyle=\color{red},
  numberstyle=\tiny\color{codegray},
  stringstyle=\color{codegreen},
  basicstyle=\footnotesize,
  breakatwhitespace=false,         
  breaklines=true,                 
  captionpos=b,                    
  keepspaces=true,                 
  numbers=left,                    
  numbersep=5pt,                  
  showspaces=false,                
  showstringspaces=false,
  showtabs=false,                  
  tabsize=2
}
\definecolor{webgreen}{rgb}{0,.5,0}
\definecolor{webbrown}{rgb}{.6,0,0}
\newcommand{\pprime}{{\prime\prime}}
\newcommand{\seqnum}[1]{\href{http://oeis.org/#1}{\underline{#1}}}
\begin{document}

\begin{center}
\end{center}

\theoremstyle{plain}
\newtheorem{theorem}{Theorem}
\newtheorem{corollary}[theorem]{Corollary}
\newtheorem{lemma}[theorem]{Lemma}
\newtheorem{proposition}[theorem]{Proposition}
\newtheorem{algo}[theorem]{Algorithm}

\theoremstyle{definition}
\newtheorem{definition}[theorem]{Definition}
\newtheorem{example}[theorem]{Example}
\newtheorem{conjecture}[theorem]{Conjecture}
\newtheorem{problem}{Problem}

\theoremstyle{remark}
\newtheorem{remark}[theorem]{Remark}

\begin{center}
\vskip 1cm{\LARGE\bf Chutes and Ladders: on some sequences inspired by 2017 Putnam A1}
\vskip 1cm
Jeremy F.~Alm \\
Department of Mathematics\\
 Lamar University\\
 Beaumont, TX, 77710\\
USA \\
\href{mailto:alm.academic@gmail.com}{\tt alm.academic@gmail.com} \\

\vskip 1cm
Matt Salomone \\
Department of Mathematics\\
Bridgewater State University\\
Bridgewater, MA, 02325\\
USA \\
\href{mailto:msalomone@bridgew.edu }{\tt msalomone@bridgew.edu} \\

\end{center}

\begin{abstract}
  The first problem of the 2017 Putnam competition was to characterize a set of natural numbers closed  under both the square-root map $n^2 \mapsto n$ and the ``add 5 and square'' map $ n \mapsto (n+5)^2$.  We reframe this as a problem on an infinite directed graph, using this framing both to generalize the problem and its solution, as well as to determine the first appearance of each number in this set under a row-wise algorithm that outputs all its elements.
\end{abstract}

\section{Problem A1 from the 2017 Putnam Exam}

The following problem appeared on the 2017 William Lowell Putnam Mathematical Competition \cite{putnam_2017}.
\begin{itemize}
\item[A1] 
Let $S$ be the smallest set of positive integers such that
\begin{enumerate}
\item[(a)]
$2$ is in $S$,
\item[(b)]
$n$ is in $S$ whenever $n^2$ is in $S$, and
\item[(c)]
$(n+5)^2$ is in $S$ whenever $n$ is in $S$.
\end{enumerate}
Which positive integers are not in $S$?

(The set $S$ is ``smallest'' in the sense that $S$ is contained in any other such set.)

\end{itemize}

The solution, which was discussed on Twitter, MathOverFlow, etc., is that all positive integers are in $S$ \emph{except} 1 and the multiples of 5. 

Inspired by this problem, we define the following triangle of integers, as \seqnum{A296142}.
\begin{definition}\label{def:putnamtriangle}
For each $i\geq 1$, define $R_i$ to be the set of integers defined recursively as follows.
\begin{itemize}
    \item $R_1 = \{2\}.$
    \item For each $k\geq 1$, if $x\in R_k$ then $(x+5)^2 \in R_{k+1}$.
    \item For each $k\geq 1$, if $x^2\in R_k$ then $x\in R_{k+1}$.
\end{itemize}
\end{definition} 

The solution of Problem A1 can then be interpreted to say that, for all integers $x\geq 2$, if $x\not\equiv 0 \pmod{5}$, then there exists an $i\geq 0$ for which $x\in R_i$.

More than the existence of such an $i$, however, we pose the question: how does $i$ vary with $x$? In other words, in which row does each integer $x \geq 2$ make its first appearance? The first five rows $R_i$ are shown in Table \ref{tab:sixrows} below.

\begin{table}[htb] \label{tab:sixrows}
\renewcommand\arraystretch{1.25}
    \centering\begin{tabular}{|c|*{5}{r}|}\hline
    $i$ & \multicolumn{5}{|l|}{Row $R_i$} \\ \hline
    1 & 2 &&&&\\ 
    2 & 49&&&&\\
    3 & 7 & 2916&&&\\
    4 & 54 & 144 & 8532241&&\\
    5 & 12 & 2921 & 3481 & 22201 & 72799221804516\\
    \hline\end{tabular}
    \caption{The first five rows $R_1, \ldots, R_5$ of the triangle of integers.}
\end{table}

While the solution of Problem A1 guarantees the appearance of all $x \geq 2, x\not\equiv 0\pmod{5}$ in the sequence of rows, it does not provide insight into {\it when} each such $x$ first appears. In this paper we will develop an algorithm that exhibits the first appearance of each such $x$ up to $x=99$. The techniques herein may be adapted to extend the algorithm to arbitrarily large $x$. We conclude by determining ``why'' Problem A1 has such a maximalist solution (avoiding only the multiples of $5$), and which generalizations of A1 admit analogous solutions.

In particular, we establish in Section \ref{sec:general} that Problem A1, generalized with $R_1 = \{r\}$ and $(x+5)^2$ replaced by $(x+d)^e$, admits an analogously maximal solution whenever $d$ is prime, $r$ is nonzero modulo $d$, and all of the prime factors of $d-1$ are prime factors of $e$. In many cases these conditions are satisfied only in rare instances: for example, the squaring map ($e=2$) has maximal solutions only in the cases where $d$ is a Fermat prime number!

The key insight that will power our argument is the observation that, modulo $5$, this problem reduces to the dynamics of the squaring map $x \mapsto x^2$ on the ring $\mathbb{F}_5^\times$ of units modulo 5. Such power-map dynamics have been well studied: they generally take the form of regular cycles with attached trees, and generically have two or more connected components. Some of the more general results regarding higher-power maps are discussed in Section \ref{sec:general}. 

In the squaring map case,  Chass\'e \cite{Chasse} and Rogers \cite{rogers_1996} independently have characterized the structure of the connected components modulo a prime $p$. In addition, others such as Vasiga \&\ Shallit \cite{Shallit} have extended these results to affine squaring maps of the form $x \mapsto x^2 + c$, of which the case $c=-2$ is exceptional. 

Pomerance and Shparlinski \cite{Pomerance} show that for every integer exponent $e>1$,  there are infinitely many primes $p$ such that the graph of the map $x \mapsto x^e$ has at least $p^{5/12}$ cycles. This shows that even for exponents larger than 2, the graphs of the power map tend not to be connected.

\section{Why This is Hard To Do}\label{sec:whyhard}

After solving Problem A1, one might ask, ``I know that 3 is in $S$, but what's the quickest way from 2 to 3 using rules (b) and (c)?'' In other words, what is the least $i$ such that $3\in R_i$? The difficulty we found in answering this question forms the motivation for the present paper. 

The ``obvious'' approach is simply to compute the rows brute-force.  Well, why not?  But in fact, saying ``Just compute $R_i$'' turned out to be similar to saying ``Let $p_1^{\alpha_1}\dots p_k^{\alpha_k}$ be the prime factorization of $N$''---if one wants the actual factorization, easier said than done.  

The main obstacle is that repeated squaring quickly produces very large integers.  Not worrying too much about the ``+5'' in (c), row $R_i$ contains an integer roughly the size of $2^{2^i}$.  Since during our computer explorations it seemed that it would suffice to examine the first 150 or so rows, let's think about  $2^{2^{150}}$,  has on the order of $10^{44}$ decimal digits.   This amounts to about $10^{45}$ binary digits, which would require $10^{44}$ (or 100 tredecillibytes) of storage. For comparison, in 2025 it is estimated that the total data storage capacity on the planet will only be approximately 200 zettabytes ($2\times 10^{24}$ bytes). See, e.g., \cite{Freeze_2021}. 

While trying to brute-force the rows, the first author's computer would grind to a halt somewhere around row 50.  A band-aid solution was to modify the code so that the ``add five and square'' rule was not applied to any integer that exceeded $10^{20}$. This allowed computation of partial rows, which allowed us to discover a path from 2 to 3 that put 3 in row 104. This implies that the first appearance of 3 is in row 104 \emph{or earlier}.  There could, it would seem, be a shorter path that involved huge integers and lots of applications of (b), i.e., repeated square-rooting. How can we rule out such a path?  As we will see, paths that begin and end in small integers that pass through huge integers are necessarily long.  A key observation that heuristically ``makes it all work''  is that two ``up-steps'' via (c) followed by two ``down-steps'' via (b) is not possible---see Theorem \ref{thm:uudd}. 

\begin{table}[htb]\label{tab:oeis}
\centering\begin{tabular}{r|*{12}{c}}\hline\hline
$x$ &2&3&4&6&7&8&9&11&12&13&14&16 \\
$a(x)$ & 1&104&122&130&3&9&103&119&5&11&105&121\\ \hline
\multicolumn{13}{c}{\; } \\ \hline
$x$ &17&18&19&21&22&23&24&26&27&28&29&31 \\
$a(x)$ & 7&13&107&123&9&15&109&125&11&17&111&127 \\ \hline
\multicolumn{13}{c}{\; } \\ \hline
$x$ & 32&33&34&36&37&38&39&41&42&43&44&46 \\
$a(x)$ & 13 & 19 & 113& 129&15&21&115&131&17&23&117&133 \\ \hline
\multicolumn{13}{c}{\; } \\ \hline
$x$ & 47&48&49&51&52&53&54&56&57&58&59&61\\
$a(x)$ & 19 & 25 & 2 & 135 & 21 & 27 & 4 & 92 & 23&29&6&94\\ \hline
\multicolumn{13}{c}{\; } \\ \hline
$x$ &62&63&64&66&67&68&69&71&72&73&74&76\\
$a(x)$ & 25 &31&8&96& 27 &33&10&98& 29 &35&12&100 \\ \hline
\multicolumn{13}{c}{\; } \\ \hline
$x$ &77&78&79&81&82&83&84&86&87&88&89&91\\
$a(x)$& 31 &37&14&102& 33 &39&16&104& 35 &41&18&106 \\ \hline
\multicolumn{13}{c}{\; } \\ \hline
$x$ &92&93&94&96&97&98&99\\
$a(x)$ & 37 &43&20&108& 39 &45&22\\ \hline\hline
\end{tabular}\caption{The first appearance row $a(x)=\min\{i\colon x\in R_i\}$ for each $x \in \{2,\ldots,100\}$ which is not a multiple of 5. See \seqnum{A366552}.}
\end{table}

\section{Chutes and Ladders on Squaring Graphs}

Denote by $S$ the set of all positive integers appearing in at least one row $R_i$. The published solution of Problem A1 is that \begin{equation*} S = \{ n \in \mathbb{N} \colon n\geq 2, n\not\equiv 0 \pmod{5} \}. \end{equation*}

In order to determine with certainty the \emph{first} appearance of a given $x\in S$, we will interpret $S$ as the vertex set of a directed graph $\Gamma_5$. Paths on this graph correspond to appearances of integers on successive rows.  For example, this sequence is a path in $\Gamma_5$:

\[
    4, 81, 9, 3, 64, 8, 169, 13, 324, 18,\ldots 
\]

\begin{definition}\label{def:gammap}
    Let $\Gamma_5$ denote a graph with the following data.

    The vertices of $\Gamma_5$ are the elements of the set $S$ of all integers appearing in at least one row $R_i$.

    The edges of $\Gamma_5$ are drawn from each integer on a given row $R_i$ to either one or two integers on the successive row $R_{i+1}$. The edge set $E(\Gamma_5) = U(\Gamma_5)\cup D(\Gamma_5)$ consists of two types of edges as follows:
    \begin{align*}
        U(\Gamma_5) &= \left\{ \left( n, (n+5)^2 \right) \colon n \in S \right\}\\
        D(\Gamma_5) &= \left\{ (n^2, n) \colon n \in S \right\}
    \end{align*}
\end{definition}
We will call the edges in $U(\Gamma_5)$ ``up-edges'' and the edges in $D(\Gamma_5)$ ``down-edges.'' In this graph the outdegree of each vertex is $2$ for every perfect square in $S$ (possessing both an outgoing up- and down- edge) and $1$ otherwise (possessing only an outgoing up-edge).

By construction, a positive integer $x$ appears in at least one row $R_i$ if, and only if, there exists a path from $2$ to $x$ in the graph $\Gamma_5$. 

The structure of the infinite graph $\Gamma_5$ is best understood via its quotient, replacing the integers in $S$ with their residue classes modulo $5$. The quotient graph $G_5 = \Gamma_5 / \sim$ has \begin{equation*} V(G_5) = \left\{ [1], [2], [3], [4]\right\} \quad \text{and} \quad E(G_5) = \left\{ ([n],[n]^2) \colon n \in S \right\} \cup \left\{ ([n]^2,[n]) \colon n \in S \right\}. \end{equation*}
The graph $G_5$ is precisely the \emph{squaring graph modulo $5$}. Rogers in \cite{rogers_1996} elucidates the structures of all squaring graphs modulo primes; we will draw on some of these more general results in Section \ref{sec:general}.

\begin{figure}
\includegraphics[width=\linewidth]{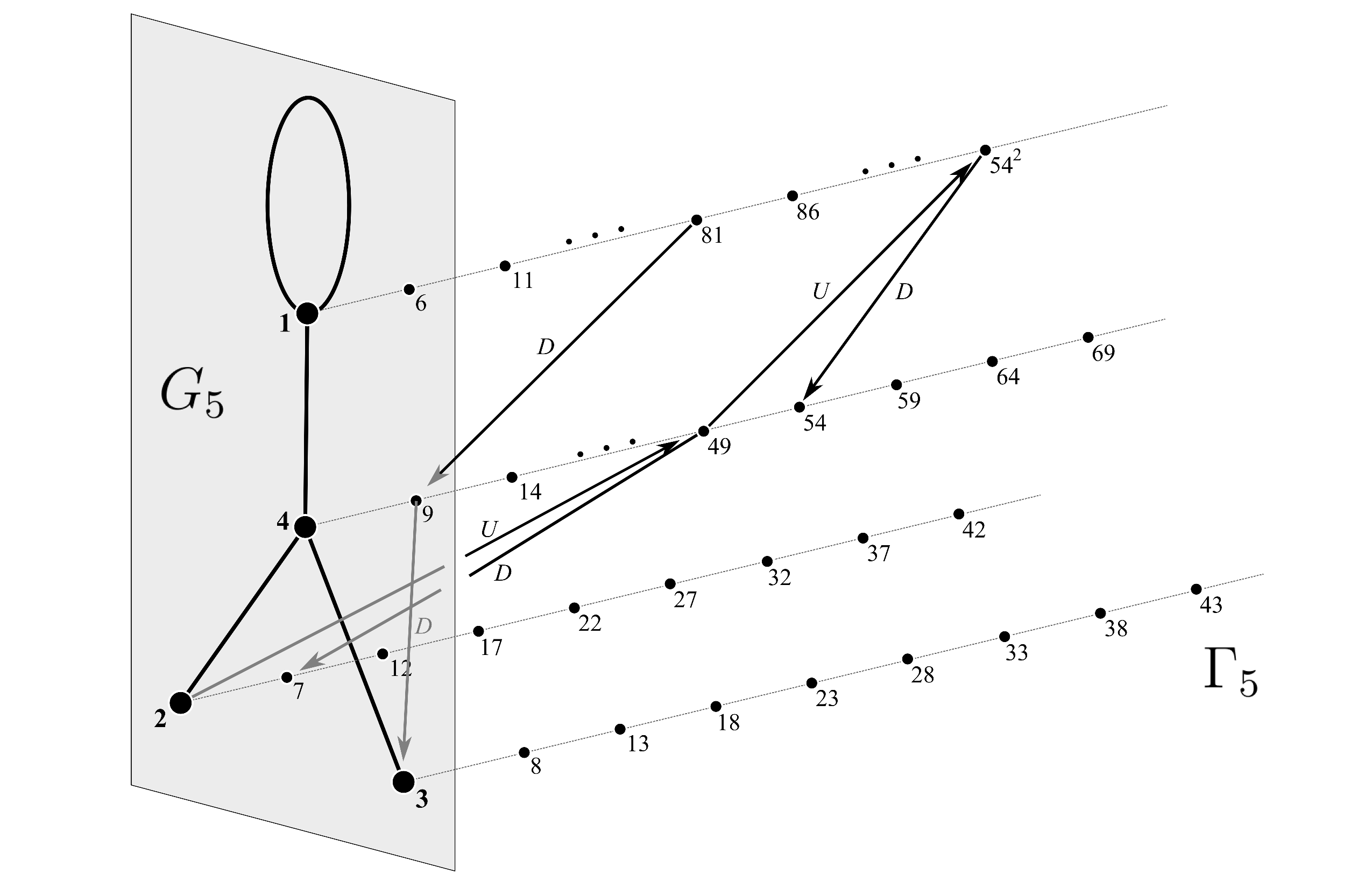}
    \caption{The directed graph $\Gamma_5$ with a selection of its up- and down-edges, and its quotient $G_5$, the (undirected) graph of the squaring map modulo $5$. The dashed ``belts'' can be viewed as $UD$-edges in $\Gamma_5$.}\label{fig:graph}
\end{figure}

We can best understand paths in $\Gamma_5$ according to their corresponding paths in the quotient $G_5$, i.e., by keeping track of the residue class modulo $5$ of the vertices visited along the path. That inspires the central metaphor of this paper.

\begin{definition}
    A \emph{conveyor belt} (or ``\emph{belt}'') is a path in $\Gamma_5$ of the form \[ UD\, UD\, \ldots \, UD.\]
\end{definition}
Every $x\in S$ has a path $UD$ originating from $x$, namely, $x \stackrel{U}\to (x+5)^2 \stackrel{D}\to x+5$. Conveyor belt paths necessarily preserve the residue class modulo 5 (i.e., they begin and end on the same vertex of the squaring graph). These may be seen as the dashed-line paths in Figure \ref{fig:graph}.

Any path in $\Gamma_5$ can be subdivided at its conveyor belt subpaths. The subpaths in between conveyor belts have two types, according to the following result.

\begin{theorem}[Chutes-and-ladders decomposition]\label{thm:decomposition}
    Every path in $\Gamma_5$ has a unique decomposition into subpaths of the following three forms:
    \begin{enumerate}
        \item Conveyor belt subpaths of the form $(UD)^i$ for some $i\geq 1$;
        \item \emph{Ladder} subpaths of the form $U^i$ for some $i\geq 1$; and
        \item \emph{Chute} subpaths of the form $D^i$ for some $i \geq 1$.
    \end{enumerate}
\end{theorem}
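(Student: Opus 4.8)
The plan is to identify each path in $\Gamma_5$ with a word $w = w_1 w_2 \cdots w_\ell$ over the two-letter alphabet $\{U,D\}$ recording its sequence of up- and down-edges, so that belts, ladders, and chutes are exactly the factors $(UD)^i$, $U^i$, and $D^i$. The theorem then becomes the purely combinatorial assertion that $w$ admits a unique factorization into blocks of these three shapes. My first move would be to pin down the convention that makes the word ``unique'' correct: every maximal alternating stretch $UDUD\cdots$ is read as a belt, so that a ladder is never immediately followed by a chute. This is what ``subdivided at its conveyor-belt subpaths'' prescribes, and it is genuinely necessary, since a lone $UD$ could otherwise be read either as a length-one belt or as a ladder $U$ abutting a chute $D$.

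For existence I would give a left-to-right greedy parse and check that each block it peels off has one of the three shapes. A leading run of down-edges can lie in no belt (belts begin with $U$) and is removed as a chute; a prefix beginning $UD$ is extended as long as the alternation persists and removed as the maximal $(UD)^i$; a prefix $U^a$ followed by a down-edge contributes a ladder $U^{a-1}$ and hands its last up-edge to the belt beginning at the ensuing $UD$ (if no down-edge follows, the whole $U^a$ is a ladder). Each step deletes a nonempty block, so the parse halts with a factorization. The one point to verify is that the stretch between two consecutive maximal belts contains no $UD$ adjacency --- otherwise it would itself host a belt --- and hence has the monotone form $D^a U^b$, a chute followed by a ladder, both admissible block types.

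Uniqueness I would prove by induction on $\ell$, showing that the first block is forced. If $w_1 = D$, the first block must be a chute absorbing the entire leading down-run, for otherwise the next block again begins with $D$ and is a second chute. If $w_1 w_2 = UD$, the convention forces a belt, whose length is fixed as the longest even alternating prefix. If $w_1 w_2 = UU$, the first block is a ladder, which the belt convention forces to stop one edge before the first $UD$, surrendering that up-edge to the following belt. In each case the first block is determined, and applying the inductive hypothesis to the remaining suffix completes the argument.

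The step I expect to be the main obstacle is exactly this forcing at the seams between blocks --- the ambiguity the belt convention is designed to remove --- together with the verification that the inter-belt gaps are genuinely monotone. Here Theorem~\ref{thm:uudd} streamlines the bookkeeping: because no path contains $UUDD$, none contains $U^a D^b$ with $a,b\ge 2$, which is the only pattern that would force a belt flanked by both a ladder and a chute at a single up-then-down crossing. The remaining ambiguities are all of the length-one type already settled by the belt convention, so the combinatorial factorization and the path-theoretic decomposition coincide.
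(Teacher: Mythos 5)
Your proposal is correct and takes essentially the same route as the paper: carve out the maximal $(UD)$-alternating stretches as conveyor belts, then observe that the segments between them contain no $UD$ and hence have the monotone form $D^aU^b$, a chute followed by a ladder. If anything you are more careful than the paper on the uniqueness half (making explicit the maximal-belt convention that resolves the $UD$ ambiguity); just note that your appeal to Theorem~\ref{thm:uudd} is both inessential --- the convention alone already settles the parsing of $U^aD^b$ blocks --- and a forward reference, since that theorem is proved after the decomposition in the paper (though no circularity results, as its proof is independent).
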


\begin{proof}
    Let $P$ be a path in $\Gamma_5$. We begin by identifying all conveyor belt subpaths $(UD)^i$ for $i\geq 1$, noting that two or more consecutive conveyor belts are equivalent to a single conveyor belt. By identifying these belts, $P$ may be decomposed into the form \begin{equation*} P = P^\prime \, C_1\, P_1\, C_2\, \ldots\, P_j C_j\, P^{\prime\prime}, \end{equation*}
    where $C_1,\ldots,C_j$ are conveyor belts and $P^\prime,P_1,\ldots,P_j,P^{\prime\prime}$ are each subpaths which do not contain $(UD)$ as a subpath, with $P_1,\ldots,P_j$ all nonempty.

    Let $P_i$ be a nonempty subpath in $ \left\{ P^\prime,P_1,\ldots,P_j,P^{\prime\prime} \right\}$. If $P_i$ begins with an up-edge $U$, then since by assumption it does not contain $(UD)$ it must consist entirely of up-edges. In this case $P_i$ is a ladder subpath.

    If $P_i$ begins with a down-edge $D$, there are two options. If it consists entirely of down-edges it is a chute subpath. If it contains one up-edge, then since it does not contain $(UD)$ it cannot contain any subsequent down-edges. In this case $P_i = D^n\, U^m$ consists of a chute followed by a ladder.
\end{proof}
At the heart of the chutes-and-ladders analogy is that ladders ``climb higher'' on the squaring graphs while chutes ``fall down lower''. This is evident in Figure \ref{fig:graph}, for example, in the ladder $2\stackrel{U}\to 49 \stackrel{U}\to 54^2$ and the chute $81\stackrel{D}\to 9 \stackrel{D}\to 3.$

We conclude this section by noting that in $\Gamma_5$, conveyor belt and ladder paths visit vertices in a strictly increasing fashion (unless otherwise stated, we are implicitly neglecting the intermediate vertex of each $UD$ belt step, so that each such step begins at $x$ and ends at $x+5$). Meanwhile, chute paths -- since they represent iterated square-roots -- visit vertices in strictly decreasing order along their length.

This observation allows for an alternative proof of Problem A1.

\begin{theorem}\label{thm:connected}
    The graph $\Gamma_5$ is connected.
\end{theorem}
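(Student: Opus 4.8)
The plan is to prove connectedness by exhibiting a directed path from $2$ to every vertex of $\Gamma_5$. Since, by construction, the vertices are exactly the integers appearing in some row $R_i$, this simultaneously reproves Problem A1 by identifying that set as $\{n \geq 2 : n \not\equiv 0 \pmod 5\}$. The easy containment --- that no vertex reachable from $2$ is $1$ or a multiple of $5$ --- follows because both edge types preserve the property of being a unit modulo $5$: an up-edge sends $n$ to $(n+5)^2 \equiv n^2$, and a down-edge sends $n^2$ to $n$, so the residue of every vertex on a path from $2$ stays in $\{1,2,3,4\}$ and in particular $2$ never reaches a multiple of $5$; ruling out $1$ is immediate since the only edge directed into $1$ would originate at $1^2 = 1$ itself. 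The substance is the reverse containment.

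For that I would first record the \emph{belt closure} property: for any reachable $n$, the conveyor-belt step $n \stackrel{U}\to (n+5)^2 \stackrel{D}\to n+5$ shows $n+5$ is reachable. Hence the reachable set is closed under adding $5$, so within each residue class $r \in \{1,2,3,4\}$ the reachable integers form a terminal ray $\{x \equiv r \pmod 5 : x \geq b_r\}$ for some threshold $b_r$ (with $b_r \geq 2$ always, and $b_1 \geq 6$ since $1$ is unreachable). It therefore suffices to prove $b_2 = 2$, $b_3 = 3$, $b_4 = 4$, and $b_1 = 6$, i.e.\ that each ray begins at the least admissible integer in its class.

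The engine for lowering these thresholds is the \emph{chute}: if $k \equiv r \pmod 5$ and the square $k^2$ (which lies in class $r^2 \bmod 5$) is already reachable, then the down-edge $k^2 \to k$ makes $k$ reachable, giving $b_r \leq k$. Combined with belt closure this yields the self-improving inequalities $b_r \leq \min\{k \equiv r : k \geq \sqrt{b_{\,r^2 \bmod 5}}\}$, which mirror the edges of the squaring quotient $G_5$ (where $2,3 \mapsto 4 \mapsto 1 \mapsto 1$). Seeding with $b_2 \leq 2$ and climbing one ladder step gives the finite bounds $b_4 \leq 49$ and then $b_1 \leq 54^2 = 2916$; iterating the residue-$1$ self-bound $b_1 \leq \min\{k \equiv 1 : k \geq \max(2,\sqrt{b_1})\}$ drives $2916 \mapsto 56 \mapsto 11 \mapsto 6$, pinning $b_1 = 6$. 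Feeding $b_1 = 6$ back through the class-$4$ bound gives $b_4 = 4$, and the class-$2$ and class-$3$ bounds then give $b_2 = 2$ and $b_3 = 3$. With all four rays starting at their minima, belt closure fills in every $n \geq 2$ with $5 \nmid n$, so $\Gamma_5$ is connected.

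The main obstacle is the self-referential descent for residue $1$: unlike classes $2,3,4$, whose thresholds are pulled down by chuting squares that live in a \emph{different} class, residue $1$ is fixed by squaring, so its threshold can only be lowered by chuting residue-$1$ squares that are themselves already known reachable. The point to get right is that once an entire terminal ray in class $1$ is available above some bound $B$, every residue-$1$ perfect square exceeding $B$ is reachable, so its root --- an integer of size about $\sqrt{B}$ --- can be chuted down; verifying that this square-root contraction is strict until it lands on $6$ (and never passes through a multiple of $5$, which is automatic since $\{1,2,3,4\}$ is closed under squaring) is what forces the residue-$1$ ray to terminate exactly at its least admissible element.
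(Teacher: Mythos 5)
Your proof is correct, but it takes a genuinely different route from the paper's. The paper proves the stronger, any-to-any statement: for arbitrary $x,y\in S$ it exhibits an explicit directed path of the shape $UU\,B\,C$ --- two ladder steps from $x$ (which always land on the residue $[1]$ of the quotient graph $G_5$), a conveyor belt $B$, and a chute $C$ descending from a backward-chute power $y^{2^k}$ chosen to be $\equiv 1\pmod 5$ and at least as large as $UU(x)$. You instead establish reachability from the single root $2$: belt closure makes the reachable set in each residue class a terminal ray with threshold $b_r$, the chute step gives the contraction $b_r \leq \min\{k\equiv r \colon k\geq \sqrt{b_{r^2\bmod 5}}\}$, and the concrete descent $2916 \mapsto 56 \mapsto 11 \mapsto 6$ pins $b_1=6$, after which $b_4=4$, $b_3=3$, $b_2=2$ follow; all your numerical steps check out. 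What the paper's construction buys is strong connectedness and a path template (ladder--belt--chute) that recurs throughout the paper; note, though, that its backward-chute step needs $y^2, y^4,\ldots$ to be vertices, which implicitly invokes the known characterization of $S$, so as an ``alternative proof of A1'' it must be read as a statement about the graph on the vertex set $\{n\geq 2 \colon 5\nmid n\}$. Your threshold-descent argument, by contrast, is fully self-contained as a reproof of A1: it derives the reachable set exactly, exhibiting the minimal elements $2,3,4,6$ of the four residue rays, at the cost of directly yielding only weak connectedness (every vertex joined to $2$) --- which is all the theorem and its corollary require. Both proofs run on the same two engines (belts fill out a residue class upward; down-edges pull class $r^2$ down to class $r$); the difference is explicit path surgery between two arbitrary endpoints versus a monotone fixed-point computation from the root.
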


\begin{proof}
    Let $x,y \in S$ be chosen arbitrarily; we will show that a path from $x$ to $y$ exists in $\Gamma_5$.

    From $x$ we may ``climb the ladder'' two steps to $w = UU(x) = \left((x+5)^2+5\right)^2$. In the squaring graph $G_5$, ascending two levels necessarily arrives at the residue $[1]$.

    Likewise, from $y$ we may ``climb the chute backward'' to obtain a strictly increasing sequence $y, y^2, y^4, y^8, y^{16},\ldots$ of integers each of which has an outbound chute ending at $y$. Again, in the squaring graph $G_5$, each of these (except possibly $y,y^2$) projects to the residue $[1]$. Let $z$ be the first member of this increasing sequence which is congruent to $1$ modulo $5$ and for which $z \geq w$. Denote by $C$ the chute path from $z$ to $y$.

    Since $w \leq z$ and both are congruent modulo $5$, there exists a conveyor belt path $B$ from $w$ to $z$.

    Thus there exists a path of the form $P=UU\, B\,C$ originating at $x$ and ending at $y$, completing the proof.
\end{proof}

\begin{corollary}
    Every integer $x$ for which $x \geq 2$ and $x\not\equiv 0 \pmod 5$ appears in at least one row $R_i$.
\end{corollary}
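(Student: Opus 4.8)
The plan is to obtain the Corollary as the special case of Theorem~\ref{thm:connected} in which the source vertex is fixed to be $2$. By the equivalence recorded above---a positive integer $x$ appears in at least one row $R_i$ if and only if there is a path from $2$ to $x$ in $\Gamma_5$---it suffices to exhibit, for each target $y$ with $y \geq 2$ and $y \not\equiv 0 \pmod 5$, a path in $\Gamma_5$ beginning at $2$ and ending at $y$. Theorem~\ref{thm:connected} furnishes exactly such a path once we set $x = 2$, so the bulk of the work is to confirm that its construction applies to every admissible $y$.

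Concretely, I would instantiate the three-part path $P = UU\,B\,C$ from the proof of Theorem~\ref{thm:connected}. Climbing two up-edges from $2$ lands on $w = \left((2+5)^2 + 5\right)^2 = 2916$, which is congruent to $1$ modulo $5$, since each up-edge squares the residue modulo $5$ and $\mathbb{F}_5^\times$ is cyclic of order $4$. From the target side I would form the strictly increasing sequence $y, y^2, y^4, y^8, \ldots$ of perfect squares, each of which admits a chute back down to $y$. The one fact needed beyond the theorem statement is that this sequence eventually meets the residue $[1]$ at a term exceeding $w$: because $y \not\equiv 0 \pmod 5$ we have $y \in \mathbb{F}_5^\times$, so Fermat's little theorem gives $y^4 \equiv 1 \pmod 5$ and hence every term from $y^4$ onward is congruent to $1$ modulo $5$. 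Choosing $z$ to be the first such term with $z \geq w$, the two endpoints $w$ and $z$ share a residue modulo $5$ and satisfy $w \leq z$, so a conveyor belt $B$ joins them; appending the chute $C$ from $z$ down to $y$ completes the path.

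The subtlety I expect to require the most care is not a computation but a point of logic: Theorem~\ref{thm:connected} is phrased for vertices $x, y \in S$, yet here I am using it to determine the membership of $S$ itself, which risks circularity. The resolution is that the path $P = UU\,B\,C$ is entirely explicit and constructive---it applies the honest integer operations $U\colon n \mapsto (n+5)^2$ and $D\colon n^2 \mapsto n$ and never presupposes that $y$ already lies in $S$; the only hypotheses its construction actually consumes are $y \geq 2$, so that the ascent and belt stay among positive integers, and $y \not\equiv 0 \pmod 5$, so that the squaring sequence reaches residue $[1]$. Once this is made explicit, the existence of $P$ witnesses a path from $2$ to $y$, which by the construction of the rows places $y \in R_i$ for some $i$, establishing the Corollary.
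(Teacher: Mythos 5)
Your proposal is correct and takes essentially the same route as the paper: instantiate the construction of Theorem \ref{thm:connected} with source $2$, and conclude that a path of $\ell$ edges places the target in row $R_{\ell+1}$. Your additional care about circularity---observing that the explicit path $UU\,B\,C$ consumes only the hypotheses $y \geq 2$ and $y \not\equiv 0 \pmod 5$, never the assumption $y \in S$---is a genuine logical point that the paper's one-line proof glosses over, and it is exactly what is needed for the corollary to serve as a non-circular ``alternative proof'' of Problem A1.
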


\begin{proof}
    This follows from the existence in particular of a path from $2$ to $x$. If the path contains $\ell$ edges, this implies $x$ appears in row $R_{\ell+1}$.
\end{proof}

The existence of such a path settles the Putnam problem by guaranteeing \emph{that} each integer in $S$ appears in some row, but does not settle the question of \emph{when} each such integer first appears, i.e., the least $i$ for which $x \in R_i$. This is a question not of mere connectedness of $\Gamma_5$ but of shortest paths in $\Gamma_5$, to which we turn in the next section.

\section{Square-to-Square Belts}

While every path in $\Gamma_5$ has a unique decomposition into belts, chutes, and ladders, not every concatenation of belts, chutes, and ladders is realized by a path in $\Gamma_5$. This is a consequence of the fact that while every $x\in S$ has a belt path and a ladder path originating at $x$, \emph{only when $x$ is a perfect square} does it also have an outbound chute path. 

In this section we will explore the limitations this imposes on paths in $\Gamma_5$, with particular attention to the lengths of belt paths that join ladders to chutes. These lengths will be an important tool for bounding from below the lengths of paths from $2$ to $x$, i.e., for guaranteeing the nonappearance of $x$ before a given row $R_i$.

We first show that a ladder subpath may not be immediately followed by a chute. Since consecutive ladders (respectively, chutes) are equivalent to a single one, this result shows that every ladder must be followed by a belt, and every chute must be preceded by a belt.

\begin{theorem}\label{thm:uudd}
    No path in $\Gamma_5$ contains $UUDD$ as a subpath.
\end{theorem}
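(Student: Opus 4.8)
The plan is to translate the combinatorial statement into a single Diophantine condition and then dispatch that condition. First I would name the starting vertex: suppose for contradiction that some path in $\Gamma_5$ contains a $UUDD$ subpath beginning at a vertex $x \in S$, so that in particular $x \geq 2$. Tracing the edge definitions from Definition \ref{def:gammap}, the two up-edges act by $x \mapsto (x+5)^2 \mapsto \left((x+5)^2+5\right)^2$.

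Next I would run the two down-edges and locate where the real constraint lives. The first $D$-edge is automatically legitimate, since its source $\left((x+5)^2+5\right)^2$ is a perfect square whose square root is $(x+5)^2 + 5$, so the first descent lands at that value. The crux is the second $D$-edge: by the definition of $D(\Gamma_5)$, it exists only if its source $(x+5)^2 + 5$ is \emph{itself} a perfect square. Thus the whole theorem reduces to the purely arithmetic claim that $(x+5)^2 + 5$ is never a perfect square when $x \geq 2$.

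To finish, I would set $m = x + 5 \geq 7$ and suppose toward a contradiction that $m^2 + 5 = t^2$ for some positive integer $t$. Factoring gives $(t-m)(t+m) = 5$; since $5$ is prime and $0 < t - m < t + m$, the only option is $t - m = 1$ and $t + m = 5$, forcing $m = 2$ and hence $x = -3 \notin S$. Equivalently, for $m \geq 3$ one has the squeeze $m^2 < m^2 + 5 < (m+1)^2$, so $m^2 + 5$ falls strictly between consecutive squares and cannot be one. Either way this contradicts $x \geq 2$, completing the argument.

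I do not expect a genuine obstacle here: once the reduction is in hand, the Diophantine step is immediate. The only points demanding care are bookkeeping ones, namely correctly identifying that it is the \emph{second} down-step, acting on $(x+5)^2+5$, that imposes the perfect-square obstruction (the first down-step being free because its source is manifestly a square), and confirming that membership $x \in S$ guarantees $x \geq 2$, which is exactly what rules out the exceptional solution $m = 2$.
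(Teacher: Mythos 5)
Your proof is correct and takes essentially the same approach as the paper: both reduce the $UUDD$ pattern to the requirement that $(x+5)^2+5$ be a perfect square (the paper writes this as $y^2-(x+5)^2=5$) and refute it by the gap between consecutive squares, since $y^2 \geq (x+6)^2$ forces $y^2-(x+5)^2 \geq 2x+11 > 5$. Your factoring variant $(t-m)(t+m)=5$ is only a cosmetic alternative to that same squeeze.
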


\begin{proof}
    Suppose to the contrary that for some $x \in S$ there exists a path $UUDD$ originating at $x$. Denote the vertices along this path by \begin{equation*} x \stackrel{U}\to a \stackrel{U}\to z \stackrel{D} \to b \stackrel{D}\to y.\end{equation*}

    By definition of up- and down-edges we can characterize the integer $z$ in two ways: \begin{equation*} z = UU(x) = \left( (x+5)^2 + 5 \right)^2 \quad \text{and} \quad DD(z) = y, \text{ i.e., } z = y^4. \end{equation*}

    We therefore have, since $x,y,z > 0$,
    \begin{align*} y^4 &= \left( (x+5)^2 + 5\right)^2, \text{ and thus}\\ y^2 - (x+5)^2 &= 5.\end{align*}
    In particular this shows $y > x+5$ so that $y^2$ is no smaller than the next perfect square greater than $(x+5)^2$, i.e., $y^2$ is at least $(x+6)^2$. So we also have, again since $x > 0$,
    \begin{align*}y^2 - (x+5)^2 &\geq (x+6)^2 - (x+5)^2\\ &= 2x+11 > 12,\end{align*}
    which is a contradiction. Thus $UUDD$ is an impermissible path in $\Gamma_5$.    
\end{proof}

\begin{corollary}\label{cor:ladderchute}
    Every ladder subpath in $\Gamma_5$ is followed by a conveyor belt, and every chute subpath is preceded by a conveyor belt.
\end{corollary}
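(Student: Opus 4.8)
The plan is to derive the corollary from the chutes-and-ladders decomposition (Theorem \ref{thm:decomposition}) together with Theorem \ref{thm:uudd}, by a process of elimination. Recall from the decomposition that, after greedily extracting the conveyor belts, every path becomes an alternating concatenation of belts and maximal non-belt pieces, where each non-belt piece is a ladder $U^i$, a chute $D^j$, or a chute-then-ladder $D^n U^m$; in particular, consecutive ladders (respectively consecutive chutes) merge into a single ladder (respectively chute). The heart of the matter is to rule out the one remaining adjacency not already excluded by the decomposition, namely a ladder immediately followed by a chute.

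First I would show that no ladder is immediately followed by a chute. Suppose a maximal ladder $U^i$ (with $i \geq 1$) were immediately succeeded by a chute $D^j$ (with $j \geq 1$). If $i \geq 2$ and $j \geq 2$, then the final two up-edges of the ladder together with the first two down-edges of the chute form the subpath $UUDD$, which is impossible by Theorem \ref{thm:uudd}. The remaining boundary cases are handled by belt extraction: if $i = 1$ or $j = 1$, then the last up-edge of the ladder and the first down-edge of the chute are adjacent and form a single $UD$, i.e.\ a conveyor belt, which the decomposition would have extracted. Hence the purported ladder-to-chute junction is in fact mediated by a belt, and in no case is there a genuine ladder immediately followed by a chute.

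It then remains to assemble the conclusion by elimination. Let $L$ be any ladder subpath that is not terminal in its path. By the decomposition, the subpath immediately following $L$ is a belt, a ladder, or a chute. It is not a ladder, since a following ladder would merge with $L$ into one longer ladder, contradicting the maximality of $L$; and it is not a chute, by the previous paragraph. Hence it is a belt, which proves that every ladder is followed by a conveyor belt. The statement that every chute is preceded by a belt follows by the mirror-image argument: a chute that is not initial is preceded by a belt, a ladder, or a chute, and the latter two possibilities are excluded by the maximality of the chute and by the no-ladder-then-chute fact, respectively. I expect the main obstacle to be the careful bookkeeping in the short cases $i=1$ or $j=1$, where one must distinguish a ladder as a maximal run of up-edges from the isolated up-edges that occur inside conveyor belts; once Theorem \ref{thm:uudd} disposes of the generic case $i,j \geq 2$, this boundary analysis is the only place where the argument can slip.
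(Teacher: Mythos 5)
Your proof is correct and takes essentially the same approach as the paper's: both invoke Theorem \ref{thm:uudd} to exclude a junction with at least two up-edges meeting at least two down-edges, and both handle the degenerate cases $i=1$ or $j=1$ by observing that the junction $UD$ re-parses as (part of) a conveyor belt under the decomposition's belt-priority convention. Your closing elimination among the three piece types is just a more explicit rendering of the paper's remark that consecutive ladders (respectively chutes) merge into a single one.
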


\begin{proof}
    Suppose a path $P=U^iD^j$ exists in $\Gamma_5$. We will show that if the $U^i$ form a ladder, the $D^j$ cannot be a chute, and vice versa.

    If $i \geq 2$ then we must have $j=1$ since otherwise $P$ would contain $UUDD$ as a subpath. But then $P=U^iD = U^{i-1}\, UD$ is a ladder followed by a belt and not a chute.

    For the same reason, if $j \geq 2$ then we must have $i=1$ and $P = UD^j = UD\, D^{j-1}$ is a belt (not ladder) followed by a chute.
\end{proof}

The proof of Theorem \ref{thm:uudd} relied upon the separation between adjacent perfect squares. These separations will be the key to bounding from below the lengths of shortest paths in $\Gamma_5$, as follows.

\begin{definition}
    A conveyor belt path $P=(UD)^i$ is called a \emph{square-to-square} belt if it both begins and ends at a perfect square.
\end{definition}

By construction of up- and down-edges, the belts that follow ladders always begin at a perfect square and the belts that precede chutes always end at a perfect square. It is therefore only the square-to-square belts which are capable of both following a ladder and preceding a chute, i.e., if a ladder-belt-chute subpath of the form $L\, B\, C$ exists, then $B$ must be a square-to-square belt. In light of Theorem \ref{thm:uudd} a stronger result in fact holds.

\begin{corollary}\label{cor:squaretosquare}
    Let $P$ be a path in $\Gamma_5$. Then 

    \begin{enumerate}\renewcommand\labelenumi{(\alph{enumi})}
        \item If $P= P^\prime\, L\, P^\pprime$ and $P^\pprime$ contains no ladders and at least one chute, then $P^\pprime$ begins with a square-to-square conveyor belt.

        \item If $P = P^\prime\, C\, P^\pprime$ and $P^\prime$ contains no chutes and at least one ladder, then $P^\prime$ ends with a square-to-square conveyor belt.

        \item If $P = L\, P^\prime\, C$ where $L$ is a ladder and $C$ is a chute, then $P^\prime$ contains a square-to-square conveyor belt subpath.
    \end{enumerate}
    
\end{corollary}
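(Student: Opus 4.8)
The plan is to reduce all three parts to two structural facts that are already available, and then to obtain part (c) as a corollary of part (a) (equivalently, of part (b)). The two facts are immediate from the edge definitions together with the paragraph preceding the statement: by the definition of the up-edges $U(\Gamma_5)$ every ladder $U^i$ terminates at a perfect square, so by Corollary \ref{cor:ladderchute} the belt that necessarily follows a ladder \emph{begins} at a perfect square; dually, by the definition of the down-edges $D(\Gamma_5)$ every chute $D^j$ originates at a perfect square, so the belt that necessarily precedes a chute \emph{ends} at a perfect square. Alongside these I would use two features of the unique decomposition of Theorem \ref{thm:decomposition}: that maximal belts are genuinely maximal (two consecutive belts collapse to one), and that, by Corollary \ref{cor:ladderchute}, no ladder is immediately followed by a chute and no chute is immediately preceded by a ladder.

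For part (a) I would argue as follows. Since $P^\pprime$ immediately follows the ladder $L$ and is nonempty (it contains a chute), Corollary \ref{cor:ladderchute} forces the first decomposition piece of $P^\pprime$ to be a conveyor belt $B$, and by the first structural fact $B$ begins at a perfect square. Now decompose $P^\pprime$: by hypothesis it contains no ladders and at least one chute, so its pieces are belts and chutes only, and by maximality no two belts are adjacent. Hence the piece immediately following $B$ must be a chute, and such a piece exists because $P^\pprime$ contains at least one chute beyond $B$. By the second structural fact a belt directly preceding a chute ends at a perfect square, so $B$ both begins and ends at a perfect square; that is, $P^\pprime$ begins with a square-to-square belt. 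Part (b) is the exact mirror image: the belt $B$ at the end of $P^\prime$ is forced by Corollary \ref{cor:ladderchute} to precede the chute $C$, hence ends at a square, while the piece preceding $B$ must be a ladder (there are no chutes and no adjacent belts), hence $B$ begins at a square.

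For part (c) I would reduce to part (a). Let $L_0$ be the \emph{last} ladder appearing in the subpath $L\,P^\prime$; it exists because $L$ itself is a ladder. Writing $P = Q\,L_0\,R$, where $Q$ is the portion before $L_0$ and $R$ the portion after, the suffix $R$ contains no ladders (by maximality of $L_0$) and at least one chute (namely $C$, which lies beyond $P^\prime$ and hence within $R$). Applying part (a) to the decomposition $P = Q\,L_0\,R$ shows that $R$ begins with a square-to-square belt $B$. It then remains to check that $B$ lies inside $P^\prime$: since $B$ follows $L_0$ it lies strictly after the initial ladder $L$, and since $B$ is a belt while $C$ is the terminal chute, $B$ lies strictly before $C$; thus $B$ is a subpath of $P^\prime$. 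Symmetrically one could instead take the first chute in $P^\prime C$ and invoke part (b).

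The genuinely routine content is confined to the two structural facts, which fall straight out of the edge definitions. The only point requiring care is the bookkeeping in part (c): confirming that the square-to-square belt produced by part (a) genuinely sits strictly between the bounding ladder and chute, i.e., inside $P^\prime$ rather than overlapping $L$ or $C$. The main conceptual lever throughout is the maximality of belts together with the ``no ladder-then-chute'' restriction of Corollary \ref{cor:ladderchute}, which is what forces the relevant belt to be flanked by a ladder on one side and a chute on the other and hence to inherit a perfect square at each of its endpoints.
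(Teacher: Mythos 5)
Your proof is correct and takes essentially the same route as the paper: parts (a) and (b) follow from Corollary \ref{cor:ladderchute} together with the facts that up-edges terminate at perfect squares and down-edges originate at them, and part (c) is obtained by passing to the last ladder and applying part (a) to what follows it --- which is exactly the reduction the paper's ``without loss of generality'' sentence gestures at, made explicit. The bookkeeping point you flag in (c) (that the resulting belt really sits inside $P^\prime$) is handled at the same level of rigor as in the paper, namely under the implicit reading that $L$ and $C$ are pieces of the canonical decomposition of Theorem \ref{thm:decomposition} rather than arbitrary runs of up- and down-edges.
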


\begin{proof}
    To establish (a), observe that by Corollary \ref{cor:ladderchute}, $P^\pprime$ necessarily begins with a conveyor belt: $P^\pprime = BQ$. Since $P^\pprime$ follows a ladder, $B$ begins at a perfect square. 
    
    But since $Q$ contains no ladders and at least one chute, the belt $B$ must be followed by a chute and therefore must also end at a perfect square, making it a square-to-square belt.

    The proof of (b) is analogous. Since it precedes a chute, $P^\prime$ necessarily ends with a conveyor belt: $P^\prime = RB$ and $B$ ends in a perfect square. 

    But $R$ contains no chutes and at least one ladder, so the belt $B$ must be preceded by a ladder and therefore also begin in a perfect square.

    Claim (c) follows by replacing, without loss of generality, $P^\prime$ in $P = L\, P^\prime\, C$ with that subpath of $P^\prime$ which follows the final ladder of $P$ (therefore contains no ladders) and precedes the final chute (therefore contains no chutes).
\end{proof}

The square-to-square conveyor belt paths are valuable for determining upper bounds for vertices visited along paths in $\Gamma_5$. This is due to the fact that the separation between adjacent perfect squares increases monotonically with the size of the squares themselves.

Note first that all perfect squares modulo $5$ are congruent to $0$, $1$, or $4$, so that any square-to-square conveyor belt in $\Gamma_5$ will project either to $[1]$ or to $[4]$ on the squaring graph $G_5$.

\begin{lemma}\label{lem:maxbelt}
    Let $B$ be a square-to-square conveyor belt path in $\Gamma_5$, and suppose $B$ consists of fewer than $150$ steps, that is, fewer than 75 successive $(UD)$.

    \begin{enumerate}\renewcommand\labelenumi{(\alph{enumi})}
        \item If $B$ projects to $[1]$ on the squaring graph, its endpoint is no larger than $91^2$.
        \item If $B$ projects to $[4]$ on the squaring graph, its endpoint is no larger than $183^2$.
    \end{enumerate}
\end{lemma}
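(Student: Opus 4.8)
The plan is to translate the geometric belt condition into an elementary statement about gaps between nearby perfect squares. A single $UD$ step carries a value $v$ through $(v+5)^2$ to $v+5$, so a conveyor belt $(UD)^i$ raises its starting value by exactly $5i$. A square-to-square belt of $i$ steps therefore runs from one perfect square $a^2$ to a larger one $b^2$ with $b^2 - a^2 = 5i$, and the hypothesis of fewer than $75$ successive $(UD)$ reads $i \leq 74$, i.e. $b^2 - a^2 = 5i \leq 370$. Since an increasing belt ends at its larger square $b^2$, bounding the endpoint means bounding $b$. Because $b^2 - a^2$ decreases as $a$ increases toward $b$, the difference $b^2 - a^2$ is at least the minimal gap $\delta(b) := b^2 - a_0^2$, where $a_0$ is the largest integer below $b$ with $a_0^2 \equiv b^2 \pmod 5$; hence every admissible endpoint $b^2$ satisfies $\delta(b) \leq 370$, and the whole problem is to solve this inequality for $b$.

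Next I would compute $\delta(b)$ residue class by residue class modulo $5$. Since $n^2 \equiv 1 \pmod 5$ precisely when $n \equiv \pm 1$ and $n^2 \equiv 4 \pmod 5$ precisely when $n \equiv \pm 2$, case (a) has $b \equiv 1$ or $4$ and case (b) has $b \equiv 2$ or $3$. In each subcase the predecessor $a_0$ is immediate: it is $b-2$ for $b \equiv 1$, $b-3$ for $b \equiv 4$, $b-4$ for $b \equiv 2$, and $b-1$ for $b \equiv 3$. Expanding $\delta(b) = b^2 - a_0^2$ in these four subcases gives, respectively,
\[
4(b-1), \qquad 6b - 9, \qquad 8(b-2), \qquad 2b - 1 .
\]

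Finally I would impose $\delta(b) \leq 370$ in each subcase and select the largest admissible $b$. In case (a), $4(b-1) \leq 370$ gives $b \leq 93$, so the largest $b \equiv 1$ is $91$ (with $\delta = 360 = 5\cdot 72$), while $6b-9 \leq 370$ caps $b \equiv 4$ at $59$; the larger endpoint is $91^2$. In case (b), $2b - 1 \leq 370$ gives $b \leq 185$, so the largest $b \equiv 3$ is $183$ (with $\delta = 365 = 5 \cdot 73$), while $8(b-2) \leq 370$ caps $b \equiv 2$ at $47$; the larger endpoint is $183^2$. To see that nothing larger survives, note that the smallest gap attained by any $b > 91$ in case (a) is $\delta(96) = 380 = 5 \cdot 76$, and by any $b > 183$ in case (b) is $\delta(188) = 375 = 5 \cdot 75$; each forces at least $75$ steps and is therefore excluded, while every still-larger $b$ only enlarges the gap.

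I expect the only delicate point to be the bookkeeping: correctly identifying $a_0$ for each residue and honoring the strict $i < 75$ cutoff at the boundary value $b = 188$, where $\delta = 375$ corresponds to exactly $75$ steps and so must be ruled out. Once the belt length is identified with $(b^2 - a^2)/5$, the lemma reduces to a finite, residue-by-residue computation with no conceptual obstacle.
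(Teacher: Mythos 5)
Your proof is correct and takes essentially the same approach as the paper: both arguments reduce a square-to-square belt to the gap between its endpoint $b^2$ and the nearest smaller square in the same residue class modulo $5$, split into the four residue subcases $b\equiv 1,4,2,3 \pmod 5$ (with gaps $4(b-1)$, $6b-9$, $8(b-2)$, $2b-1$, matching the paper's $n-m = 2,3,4,1$ cases), and solve the resulting linear inequalities to isolate $91$ and $183$ as the extremal endpoints. Your $\delta(b)$ formulation is just an inequality-flavored restatement of the paper's without-loss-of-generality reduction to belts between adjacent squares, and your boundary check at $b=188$ (exactly $75$ belt steps, hence excluded) mirrors the paper's strict cutoff $n<188$.
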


\begin{proof}
    We lose no generality by assuming that $B$ begins and ends at \emph{adjacent} squares, i.e., that it does not visit any perfect squares other than its beginning and end points. If it does, we can replace $B$ with a square-to-square belt having the same endpoint but originating at the next least adjacent square in its residue class modulo $5$.

    For (a), assume that $B$ begins and ends at perfect squares congruent to $1$ modulo $5$. Since the square roots of $1$ modulo $5$ are $1$ and $4$, if $B$ begins at $m^2$ and ends at $n^2$ then either $n-m = 2$ or $n-m=3$.

    If $n-m=2$ then $B$ ends at $n^2$ for some $n\equiv 1\pmod{5}$ and has the form \begin{equation*} (n-2)^2 \stackrel{UD}\to (n-2)^2 + 5 \stackrel{UD}\to (n-2)^2 + 10 \stackrel{UD}\to\cdots\stackrel{UD}\to n^2-5 \stackrel{UD}\to n^2.\end{equation*}
    This path consists of $\frac25\left( n^2 - (n-2)^2 \right) = \frac85(n-1)$ steps. This is less than 150 for $n < 95$, for which the largest such $n$ congruent to $1$ modulo $5$ is $n=91$.

    If $n-m=3$ then $B$ ends at $n^2$ for some $n\equiv 4\pmod{5}$ and consists of $\frac25\left(n^2-(n-3)^2\right) = \frac65(2n-3)$ steps. This is less than 150 for $n < 64$, so the largest possible $n$ congruent to $4$ modulo $5$ is $n=59$, which is a lesser endpoint than in the case above. This confirms claim (a).

    For (b), assume that $B$ begins and ends at perfect squares congruent to $4$ modulo $5$. The mod-$5$ square roots of $4$ are $2$ and $3$, so if $B$ begins at $m^2$ and ends at $n^2$ then we have either $n-m=1$ or $n-m=4$.

    We have $n-m=1$ when $n\equiv 3\pmod 5$ and in this case $B$ consists of $\frac25\left( n^2-(n-1)^2 \right) = \frac25(2n-1)$ steps. This is less than $150$ for $n < 188$, i.e., is maximized when $n=183$.

    When $n-m=4$, i.e., $n\equiv 2\pmod 5$, the path $B$ consists of $\frac25\left(n^2-(n-4)^2\right) = \frac{16}{5}(n-2)$ steps, and this is less than 150 for $n < 49$. In this case the largest possible such $n$ is $n=47$ which is a lesser endpoint than the case above. This confirms claim (b).
    \end{proof}

\section{Main Result}\label{sec:mainresult}

We now have the tools to determine a uniform bound on the vertices visited along the shortest paths in $\Gamma_5$ from $2$ to $a \in S$, for all $a < 100$. Direct computation of the rows $R_i$, discarding all elements greater than this bound, shows that all such $a$ appear within the first 150 rows; this theorem guarantees that in this discarding process we are not forsaking any shorter path. Therefore the first time this computation ``discovers'' each $a$ is indeed the first row in which that $a$ appears.

We begin by establishing how (most) paths in $\Gamma_5$ reach their maximal vertices.

\begin{theorem}\label{thm:howmax}
    Let $P$ be a path in $\Gamma_5$ of length no more than $2\ell$. Denote by $M$ the largest vertex visited along $P$.

    If $P$ visits at least two vertices both before and after it visits $M$, then 
    \begin{enumerate}\renewcommand\labelenumi{(\alph{enumi})}
        \item $M=a^4$ is a fourth power;
        \item $M$ is the penultimate vertex visited along a conveyor belt ending at $a^2$; and
        \item $M \leq  \frac1{16}(5\ell+1)^4.$

    \end{enumerate}
    
\end{theorem}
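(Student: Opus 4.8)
The plan is to pin down the local structure of $P$ at its maximal vertex $M$ using only maximality together with Theorem~\ref{thm:uudd}, read off (a) and (b) directly, and then obtain the quantitative bound (c) from the square-separation estimates already developed in Lemma~\ref{lem:maxbelt}.

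First I would analyze the edges entering and leaving $M$. Since $M$ is maximal, the edge entering $M$ cannot be a down-edge (a down-edge $n^2\to n$ arrives at a vertex strictly smaller than its source, which would force the larger source $M^2>M$ to be visited), so it is an up-edge $v_{-1}\stackrel{U}\to M$ with $M=(v_{-1}+5)^2$. Symmetrically the edge leaving $M$ cannot be an up-edge, since $M\stackrel{U}\to(M+5)^2>M$; hence it is a down-edge $M\stackrel{D}\to v_1$ with $M=v_1^2$. Using the hypothesis that $P$ visits a second vertex $v_2$ after $M$, I examine the edge $v_1\to v_2$: were it an up-edge we would reach $(v_1+5)^2>v_1^2=M$, contradicting maximality, so it is a down-edge and $v_1=v_2^2$. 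Writing $a=v_2$ gives $v_1=a^2$ and $M=v_1^2=a^4$, which is (a); and since $v_{-1}\stackrel{U}\to M\stackrel{D}\to a^2$ is a single $UD$ belt step terminating at the perfect square $a^2$ with $M$ its penultimate vertex, this is (b). The second vertex $v_{-2}$ before $M$ enters only to invoke Theorem~\ref{thm:uudd}: the edge $v_{-2}\to v_{-1}$ must be a down-edge, since an up-edge there would create the forbidden subpath $v_{-2}\stackrel{U}\to v_{-1}\stackrel{U}\to M\stackrel{D}\to a^2\stackrel{D}\to a$.

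For (c) I would locate $M$ inside the square-to-square belt furnished by the chutes-and-ladders decomposition. The descent after $M$ is a chute (it begins $M\to a^2\to a$) and the ascent into $M$ terminates a belt at the square $a^2$, so by Corollary~\ref{cor:squaretosquare} the belt $B$ ending at $a^2$ is square-to-square and its final peak is $M=a^4$. As a subpath of $P$, this $B$ has at most $2\ell$ edges. I would then reuse the count from Lemma~\ref{lem:maxbelt}: a square-to-square belt ending at $a^2$ and beginning at the adjacent square of the same residue class modulo $5$ consists of $\frac{2}{5}\bigl(a^2-(a-\delta)^2\bigr)=\frac{2}{5}\delta(2a-\delta)$ steps, where $\delta\in\{1,2,3,4\}$ is the gap to that adjacent square. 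The extremal case is $\delta=1$, occurring precisely when $a\equiv 3\pmod 5$ (so that $B$ projects to $[4]$): the belt then has $\frac{2}{5}(2a-1)$ steps, and $\frac{2}{5}(2a-1)\le 2\ell$ rearranges to $2a-1\le 5\ell$, i.e. $a\le\frac{1}{2}(5\ell+1)$. Each remaining residue forces $\delta\ge 2$ and hence an even smaller bound on $a$, so in every case $a\le\frac{1}{2}(5\ell+1)$ and therefore $M=a^4\le\frac{1}{16}(5\ell+1)^4$.

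The main obstacle is the step certifying that $B$ is genuinely square-to-square, since only for such belts does the monotone square-separation estimate apply; I would handle it by passing (as in Lemma~\ref{lem:maxbelt}) to the belt on the same endpoint that begins at the next adjacent square of the correct residue, so that no intermediate squares are visited and the single-gap count $\frac{2}{5}\delta(2a-\delta)$ is exact. The delicate part is then the residue bookkeeping that isolates $\delta=1$ as extremal and thereby produces the precise constant $\frac{1}{16}$. A secondary point I would verify carefully is that the hypothesis of two vertices on each side of $M$ is exactly what licenses both the $UUDD$ exclusion on the left and the forced second down-edge on the right; with fewer vertices the maximal vertex could sit at a ladder-to-chute junction and conclusions (a)--(c) would degrade.
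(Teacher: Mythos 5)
Your argument for parts (a) and (b) is correct and is essentially the paper's: maximality of $M$ forces the local pattern $D\,U\,D\,D$ around it, with Theorem \ref{thm:uudd} supplying the first $D$, so $M=a^4$ is the final peak of a conveyor belt ending at $a^2$.

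Part (c), however, has a genuine gap, exactly at the step you flagged as the main obstacle: the belt $B$ ending at $a^2$ need \emph{not} be square-to-square, and Corollary \ref{cor:squaretosquare} does not certify that it is. Part (b) of that corollary applies only when the portion of $P$ preceding the final chute contains \emph{no chutes and at least one ladder}, and nothing in the theorem's hypotheses guarantees this. Concretely, consider the path
\[
1296=6^4 \stackrel{D}\to 36 \stackrel{D}\to 6 \stackrel{(UD)^{23}}\longrightarrow 121 \stackrel{D}\to 11,
\]
whose largest vertex $M=121^2=11^4$ is the peak of the final belt step and has at least two vertices on each side. The belt ending at $a^2=121$ begins at $6$, which is not a perfect square, and the prefix of the path contains a chute and no ladder, so the corollary is inapplicable and your intermediate claim is simply false for this path. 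Your proposed repair --- re-anchoring $B$ at the next adjacent square of the correct residue class --- presupposes that the belt reaches down at least that far, i.e., that it is long; but the belt's length is precisely what you are trying to bound from below, so the repair is circular.

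The paper closes this case with an argument your proof lacks (its Case 2): if the belt's initial vertex $a^2-5i$ is not a perfect square, then it must be entered by \emph{two} consecutive down-steps, descending from $(a^2-5i)^4$; maximality of $M=a^4$ forces $(a^2-5i)^4 \le a^4$, which compels the belt to be long, namely $10i \ge a^2-1$, whence $a^2 \le 20\ell+1$. A short induction then shows $20\ell+1 \le \frac14(5\ell+1)^2$ for $\ell \ge 3$, so the square-to-square bound dominates and (c) holds in both cases. Your computation in the square-to-square case (residue bookkeeping with gap $\delta\in\{1,2,3,4\}$, extremal at $\delta=1$) is correct --- the paper obtains the same bound more directly by comparing with the belt from $(a-1)^2$ --- but it constitutes only half of the required argument.
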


\begin{proof}
    Denote by $w,x,y,z$ the vertices before and after $M$ in the path: \begin{equation*} \to w \to x \to M \to y \to z \to\end{equation*}

    Since $M$ is the largest vertex visited by $P$, in particular it must be larger than the vertices $x$ and $y$. This guarantees that $M$ is preceded by an up-step and is followed by a down-step, so that $y=\sqrt{M}$ and $x=y-5$:
    \begin{equation*} \to w \to y-5 \stackrel{U}\to M \stackrel{D}\to y \to z\end{equation*}

    If the step $y\to z$ were an up-step, then $z=(y+5)^2 = M + 5y+25 > M$ which would contradict the maximality of $M$. Thus this step must be a down-step, and $z = \sqrt{y} = \sqrt[4]{M}$.

    Replacing $z$ by $a$, we have established claim (a):

    \begin{equation*}
        \to w \to a^2-5 \stackrel{U}\to \; \underbrace{a^4}_{=M} \; \stackrel{D} \to a^2 \stackrel{D}\to a \to 
    \end{equation*}

    By Theorem \ref{thm:uudd}, the step $w \to a^2-5$ cannot be an up-step. This means $w=(a^2-5)^2$ and characterizes the two vertices both before and after $M$ in the path as:
    
    \begin{equation*}
        \to (a^2-5)^2 \stackrel{D}\to a^2-5 \stackrel{U}\to \; \underbrace{a^4}_{=M} \; \stackrel{D} \to a^2 \stackrel{D}\to a \to 
    \end{equation*}
    In particular, it follows that $M$ is visited along a conveyor belt path and not along a ladder. Since the conveyor belt step $(UD)$ containing $M$ is followed by a down-step, i.e. a chute, it is the final step along that conveyor belt, establishing claim (b).

    Assume that the conveyor belt ending at $a^2$ consists of $2i$ steps, that is, we have the path

    \begin{equation*}
        \to a^2-5i \stackrel{(UD)^{i-1}}\to a^2-5 \stackrel{U}\to \; \underbrace{a^4}_{=M} \; \stackrel{D} \to a^2 \stackrel{D}\to a \to 
    \end{equation*}

    We proceed in two cases.
    
{\it Case 1: This conveyor belt is square-to-square.} In this case it must therefore have at least as many steps as that conveyor belt which begins at the adjacent lesser square, $(a-1)^2$.

The conveyor belt $(a-1)^2 \stackrel{(UD)^i} \longrightarrow a^2$ consists of $2i = \frac25\left(a^2-(a-1)^2\right) = \frac25(2a-1)$ steps. We must therefore have
\begin{align*}
    \frac25(2a-1) &\leq 2\ell\\ 
    a &\leq \frac12(5\ell+1).
\end{align*}

{\it Case 2: This conveyor belt is {\it not} square-to-square.} In this case $a^2-5i$ is not a square, so it is preceded by a down-step (since up-steps always end in squares) and, since it is the beginning of its conveyor belt, this down-step is preceded by another down-step (since otherwise the conveyor belt would have begun at a number less than $a^2-5i$).

    \begin{equation*}
        \to (a^2-5i)^4 \stackrel{D}\to(a^2-5i)^2 \stackrel{D}\to a^2-5i \stackrel{(UD)^{i-1}}\to a^2-5 \stackrel{U}\to \; \underbrace{a^4}_{=M} \; \stackrel{D} \to a^2 \stackrel{D}\to a \to 
    \end{equation*}

    Now suppose to the contrary that the conveyor belt has fewer than $\frac1{10}(a^2-1)$ steps, i.e., that $10i < a^2-1$.

    In this case, we have
    \begin{align*}
        (a^2-5i)^2 &= a^4 -10ia^2 + 25i^2 \\ &> a^2(a^2-10i)\\ &> a^2
    \end{align*}
    so that in particular, $(a^2-5i)^4 > a^4 = M$ contradicting the maximality of $M$.

    Thus the conveyor belt has at least $\frac1{10}(a^2-1)$ steps, and in particular \begin{align*}
        \frac1{10}(a^2-1) &\leq 2\ell\\
        a^2 &\leq 20\ell + 1.
    \end{align*}

    Finally, by induction we can establish that for all $\ell \geq 3$ we have $\frac14(5\ell+1)^2 \geq 20\ell+1$. This holds since for $\ell=3$ this inequality takes the form $64 \geq 61$, and if we assume that it holds for arbitrary $\ell$ it follows that
    \begin{align*}
        \frac14\left( 5(\ell+1)+1 \right)^2 &= \frac14(5\ell+1)^2 + \frac14(40\ell+35)\\
        &\geq 20\ell+1 \, + 10\ell + \frac{35}{4}\\
        &\geq 20\ell + 1 \, + 30 + \frac{35}{4} \geq 20\ell + 21 = 20(\ell+1) + 1.
    \end{align*}

    For $\ell < 3$ we lose no generality in saying that $P$ is nevertheless a path of length no more than $6 = 2(3)$ and the above result still holds. Since $P$ necessarily falls into one of either Case 1 or Case 2, the larger of these two quantities bounds $M$ from above.
    
\end{proof}

The above result applies to many paths in $\Gamma_5$. Most notably, it does not cover paths which attain their maximum within one vertex of either endpoint. In the endpoints may be straightforwardly used to bound the maximum so that a complete classification follows:

\begin{corollary}\label{cor:allbound}
    Let $P$ be a path in $\Gamma_5$ that begins at $x$ and ends at $y$ and has length no more than $2\ell$. If $M$ is the largest vertex visited along $P$, then \begin{equation*}
        M \leq \max\left\{\tfrac1{16}(5\ell+1)^4, (x+5)^2, y^2 \right\}.
    \end{equation*}
\end{corollary}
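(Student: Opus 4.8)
Looking at this corollary, I need to extend Theorem~\ref{thm:howmax} to cover all paths, including those whose maximum occurs near an endpoint.

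Let me understand the setup. Theorem~\ref{thm:howmax} establishes the bound $M \leq \frac{1}{16}(5\ell+1)^4$ only when $P$ visits at least two vertices both before and after visiting $M$. The corollary must cover the remaining cases: when $M$ is within one vertex of an endpoint.

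The structure of the proof is clear — it's a case analysis on the position of $M$ along the path.

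---

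The plan is to handle the remaining cases not covered by Theorem~\ref{thm:howmax}, namely those paths where the maximal vertex $M$ is visited within one vertex of either the start $x$ or the end $y$. When at least two vertices are visited both before and after $M$, Theorem~\ref{thm:howmax}(c) already gives $M \leq \frac{1}{16}(5\ell+1)^4$, so the maximum over the three quantities is immediate. The work is therefore to bound $M$ in the boundary cases, where I expect the endpoint terms $(x+5)^2$ and $y^2$ to do the job.

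First I would dispose of the case where $M$ is visited as the very first or very last vertex of $P$. If $M=x$ or $M=y$, then trivially $M \leq \max\{x,y\} \leq \max\{(x+5)^2, y^2\}$ (since $x,y \geq 2$), and we are done. Next I would treat the case where $M$ is the second vertex, i.e.\ $P$ begins $x \to M \to \cdots$. The step $x \to M$ must be an up-step (a down-step $x=(\sqrt{x})^2 \to \sqrt{x}$ would decrease, contradicting maximality), so $M = (x+5)^2$, giving $M \leq (x+5)^2$ directly. Symmetrically, if $M$ is the penultimate vertex with $P$ ending $\cdots \to M \to y$, then the step $M \to y$ must be a down-step (an up-step would land above $M$), so $M = y^2$ and $M \leq y^2$.

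The slightly more delicate subcase is when $M$ is visited \emph{second} but the path is long enough that it also has at least two vertices \emph{after} $M$ — and symmetrically when $M$ is penultimate with at least two vertices before it. These are precisely the configurations falling outside the hypothesis of Theorem~\ref{thm:howmax} (which demands two vertices on \emph{both} sides). But the argument above still applies verbatim: the reasoning that $x \to M$ forces $M=(x+5)^2$ (resp.\ $M \to y$ forces $M=y^2$) uses only the maximality of $M$ and the single adjacent step, not anything about the far side. So whenever $M$ is within one vertex of an endpoint, one of the two endpoint bounds applies regardless of what happens on the other side. The main obstacle — such as it is — is simply to verify that every path falls into exactly one of these mutually exhaustive categories (two vertices on each side, versus $M$ within one vertex of some endpoint) and that the claimed bound holds in each; since the three cases cover all positions of $M$, taking the maximum of the three quantities $\frac{1}{16}(5\ell+1)^4$, $(x+5)^2$, and $y^2$ yields the result in every case.
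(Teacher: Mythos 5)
Your proposal is correct and follows essentially the same case analysis as the paper's proof: invoke Theorem~\ref{thm:howmax} when $M$ has at least two vertices on both sides, force an up-step $M=(x+5)^2$ when $M$ is the second vertex, force a down-step $M=y^2$ when $M$ is the penultimate vertex, and dispose of $M=x$ or $M=y$ trivially since $x,y\geq 2$. Your extra remark that the endpoint arguments apply regardless of what happens on the far side of $M$ is a useful explicit check of exhaustiveness, but it is the same argument the paper leaves implicit.
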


\begin{proof}
    If $P$ contains at least two vertices both before and after $M$, then $M \leq \frac1{16}(5\ell+1)^4$ by Theorem \ref{thm:howmax}.

    If $M$ is attained at the second vertex of $P$, then it must be preceded by an up-step, i.e., $M = Ux = (x+5)^2$.

    If $M$ is attained at the second-to-last vertex of $P$, then it must be followed by a down-step, i.e., $DM = \sqrt{M} = y$.

    If $M$ is attained at the first vertex (respectively last vertex) of $P$, then clearly $M=x$ (respectively $M=y$). Since $x,y \geq 2$ each of these is less than the maximum in the preceding two cases.
\end{proof}

We are now in a position to verify the first appearances of each $a < 100$ within the first $150$ rows of Table \ref{tab:sixrows} using the upper bound in the algorithm from Section \ref{sec:whyhard}.

\begin{theorem}\label{thm:putnambound}
    Let $a\in S$ with $a < 100$. If $a$ appears in row $R_i$ for some $i \leq 150$, then there exists a path $P$ from $2$ to $a$ along which all vertices are no greater than $M=188^4$.
\end{theorem}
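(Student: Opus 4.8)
The plan is to obtain this theorem as a one-line consequence of Corollary~\ref{cor:allbound}, specialized to the single calibrated value $\ell=75$. The first step is purely bookkeeping: by the correspondence between rows and paths (if $a$ appears in row $R_i$, then $a$ is reachable from $2$ along a path of $i-1$ edges, as encoded in Definition~\ref{def:putnamtriangle}), the hypothesis $i\le 150$ furnishes a path $P$ from $2$ to $a$ of length at most $149$. I would fix this $P$ and argue that it \emph{itself} obeys the desired vertex bound, so that exhibiting $P$ is all the existence the statement requires.

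Next I would apply Corollary~\ref{cor:allbound} to $P$, taking $x=2$, $y=a$, and $2\ell=150$; this is legitimate since $P$ has length at most $149\le 150=2\ell$, i.e.\ $\ell=75$. Writing $M$ for the largest vertex visited by $P$, the corollary yields
\begin{equation*}
M \le \max\left\{ \tfrac1{16}(5\ell+1)^4,\ (x+5)^2,\ y^2 \right\} = \max\left\{ \tfrac1{16}(376)^4,\ 49,\ a^2 \right\}.
\end{equation*}
The whole point of the value $\ell=75$ is that the quartic term collapses exactly: since $376=2\cdot 188$, one has $\tfrac1{16}(376)^4=\tfrac1{16}\cdot 2^4\cdot 188^4=188^4$, which is precisely the target $M$.

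It then remains only to confirm that this quartic term dominates the other two entries of the maximum. Because $x=2$ we have $(x+5)^2=49$, and because $a<100$ we have $y^2=a^2<10^4$; both are dwarfed by $188^4\approx 1.25\times 10^9$. Hence the maximum equals $188^4$, so every vertex along $P$ is at most $188^4$, as claimed.

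I expect no genuine difficulty to remain at this stage: all of the analytic substance — locating where a path attains its maximum, and bounding the lengths of square-to-square belts — has already been absorbed into Theorem~\ref{thm:howmax} and Lemma~\ref{lem:maxbelt}. The only points demanding care are the index-versus-length bookkeeping in the first step and the deliberate calibration $\ell=75$, which is exactly what engineers the bound into the clean form $188^4$ rather than some unilluminating constant.
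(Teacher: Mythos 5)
Your proposal is correct and follows exactly the paper's own route: the paper proves Theorem~\ref{thm:putnambound} in one line by setting $\ell=75$, $x=2$, $y<100$ in Corollary~\ref{cor:allbound}, which is precisely your argument, only with the arithmetic $\tfrac1{16}(376)^4=188^4$ and the row-to-path-length bookkeeping spelled out explicitly. Nothing is missing.
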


\begin{proof}
    This bound follows immediately by setting $\ell=75$ and $x=2$, and stipulating that $y<100$, in Corollary \ref{cor:allbound}.
\end{proof}

\section{Generalizations of Problem A1 and Its Solution}\label{sec:general}

The result of Corollary \ref{cor:allbound} shows how the first appearances of arbitrarily large endpoints, for paths beginning at $2$, may be found by computing the first $2\ell$ rows of $R_i$, discarding entries greater than the appropriate upper bound. 

The methods of this paper may also be applied to more generalized versions of Problem A1. One immediate generalization is to vary the ``root'' of the triangle, the entry on the first row. If $R_1 = \{r\}$ for some $r \geq 2$ that is not a multiple of $5$, Theorem \ref{thm:connected} shows that paths from $r$ to any other endpoint $y\in S$ exist, and the algorithm of Section \ref{sec:whyhard} and upper bound of Corollary \ref{cor:allbound} may be used without adaptation.

Two more interesting generalizations of Problem A1 arise from replacing the number $5$ with another integer $d \geq 2$, and increasing the exponent to investigate higher-power analogues. Here we will show how some known results about the congruence $x^k\equiv y \pmod{d}$, primarily inspired by Somer and K\v{r}\'i\v{z}ek \cite{Somer} and Chou and Shparlinski \cite{ChouShparlinski}, characterize the existence of ``maximal'' solutions of Problem A1.

We begin by defining the generalized problem and establishing criteria for its solution to be called maximal.

\begin{definition}\label{def:generalized_a1}
    For $d,e,r\geq 2$, let $S = S(d,e,r)$ be the smallest set of positive integers such that
    \begin{enumerate}\renewcommand{\labelenumi}{(\alph{enumi})}
        \item $r$ is in $S$,
        \item $n$ is in $S$ whenever $n^e$ is in $S$, and
        \item $(n+d)^e$ is in $S$ whenever $n$ is in $S$.
    \end{enumerate}
    The set $S$ is ``smallest'' in the sense that $S$ is contained in any other such set. We say that $S$ is the \emph{solution of Problem $A1(d,e,r)$}. 
\end{definition}

In this notation, the Putnam problem involved finding the solution of Problem $A1(5,2,2)$, and we call its solution maximal in the following sense.

\begin{definition}
    The solution $S=S(d,e,r)$ of Problem $A1(d,e,r)$ is called \emph{maximal} if it consists of all non-multiples of $d$, i.e., if \begin{equation*}
        S = \bigl\{ n \geq 2 \colon n\not\equiv 0 \pmod{d} \bigr\}.
    \end{equation*}
\end{definition}

In this section we will establish the following result which characterizes which generalized problems have maximal solutions.

\begin{theorem}\label{thm:generalization}
    Problem $A1(d,e,r)$ admits a maximal solution if and only if all of the following conditions hold.
    \begin{enumerate}
        \item $d = p$ is a prime;
        \item $r \not\equiv 0 \pmod{p}$; and
        \item $e$ is a multiple of the radical of $p-1$, that is, given a prime factorization $p-1 = p_1^{n_1}\cdots p_k^{n_k}$ we have $p_1\cdots p_k \mid e$.
    \end{enumerate}
\end{theorem}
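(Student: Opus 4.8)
The plan is to reduce Problem $A1(d,e,r)$ to the dynamics of the single \emph{functional graph} $F$ whose vertices are the residues in $\mathbb{Z}/d\mathbb{Z}$ and whose edges are $[m]\to[m^e]$; this is the mod-$d$ power-map graph already foregrounded in the reduction $\Gamma_5\to G_5$. The linchpin is an invariance lemma: because every up-edge $(n,(n+d)^e)$ projects to the forward edge $[n]\to[n^e]$ of $F$ and every down-edge $(n^e,n)$ projects to that same edge traversed backwards, any path in $\Gamma_d$ projects to a walk in the underlying \emph{undirected} graph of $F$. Hence a path from $r$ to $y$ can exist only if $[r]$ and $[y]$ lie in the same weakly connected component of $F$. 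Since a functional graph has out-degree one at every vertex, its weak components are in bijection with its cycles, and each vertex belongs to the component of the unique cycle its forward orbit reaches. The whole theorem thus becomes the statement: \emph{$S$ is maximal iff every non-multiple residue reaches the same cycle of $F$ as $[r]$ does, while no multiple of $d$ is ever produced.}

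For the sufficiency direction I would first show $S$ contains no multiple of $d$: when $d=p$ is prime the set $T$ of positive integers $\not\equiv 0\pmod p$ satisfies conditions (a)--(c) (using that $p$ prime makes $n^e\equiv 0$ equivalent to $n\equiv 0$, and $(n+p)^e\equiv n^e$), so minimality gives $S\subseteq T$. Next I would show every $y\ge 2$ with $y\not\equiv 0\pmod p$ lies in $S$ by adapting the proof of Theorem~\ref{thm:connected}. Condition (3), $\operatorname{rad}(p-1)\mid e$, is exactly the assertion that $p-1$ divides some power $e^k$, i.e. that iterated $e$-th powering sends \emph{every} unit of $\mathbb{F}_p^\times$ to $1$; this makes $\{[1]\}$ the unique cycle into which all units funnel. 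I would then climb a ladder $U^k$ from $r$ until its residue is $[1]$, run a backward chute $y,y^e,y^{e^2},\dots$ until it reaches an integer $z\equiv 1\pmod p$ with $z$ as large as desired, and join the two with a residue-preserving conveyor belt (legitimate since both endpoints are $\equiv 1\pmod p$ and the belt climbs), exactly as in Theorem~\ref{thm:connected}; the vocabulary of Theorem~\ref{thm:decomposition} guarantees these pieces concatenate into a genuine $\Gamma_d$-path.

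For necessity I would contrapose on each condition. Condition (2) is immediate: if $d\mid r$ then $r\in S$ is itself a multiple of $d$, so $S\neq\{n\ge 2:n\not\equiv 0\}$. For condition (1), suppose $d$ is composite. If $d$ is not squarefree, pick a non-multiple $n_0\ge 2$ of $d$ with $d\mid n_0^e$ (e.g. $n_0=d/q$ for a prime $q$ with $q^2\mid d$); maximality would force $n_0\in S$ and hence $(n_0+d)^e\in S$, a multiple of $d$, a contradiction. If $d=q_1\cdots q_s$ is squarefree with $s\ge 2$, then by CRT $F\cong F_{q_1}\times\cdots\times F_{q_s}$, and the non-multiple residues $(1,0,\dots,0)$ and $(0,1,0,\dots,0)$ are distinct fixed points of $F$ in distinct weak components, so at most one is connected to $[r]$ and the other lies outside $S$. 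For condition (3), with $d=p$ prime and $r\not\equiv 0$, if $\operatorname{rad}(p-1)\nmid e$ then writing $p-1=AB$ with $A$ supported on primes dividing $e$ and $B>1$ coprime to $e$, multiplication by $e$ (via the discrete logarithm, $t\mapsto et$ on $\mathbb{Z}/(p-1)\mathbb{Z}$) is nilpotent on the $A$-part but a nontrivial permutation on the $B$-part; its cycles correspond to the orbits on $\mathbb{Z}/B\mathbb{Z}$, of which there are at least two. Thus $F$ restricted to $\mathbb{F}_p^\times$ has at least two weak components, so some unit is unreachable from $[r]$ and is missing from $S$.

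I expect the main obstacle to be the sufficiency construction rather than the component count: one must verify that the abstract residue-level connection $[r]^{e^k}=[y]^{e^j}=[1]$ genuinely lifts to an \emph{integer} path with compatible sizes (the ladder top below the chute top), which is where the monotonicity of belts and the size bookkeeping of Theorem~\ref{thm:connected} do the real work. The bijection between weak components and cycles of $F$, together with the $A$/$B$ (coprime-to-$e$) splitting that identifies $\operatorname{rad}(p-1)\mid e$ with ``every unit funnels to $1$,'' are the conceptual steps that make the number-theoretic conditions fall out; the remainder is CRT bookkeeping localizing the failure for composite $d$.
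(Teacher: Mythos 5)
Your proposal is correct, and it shares the paper's central framing: maximality of $S(d,e,r)$ is equivalent to (i) $d$ prime and $r$ a unit, which keep multiples of $d$ out of $S$, together with (ii) connectivity of the $e$-th power-map graph on $\mathbb{F}_p^\times$, lifted to integer paths by the ladder--belt--chute construction of Theorem~\ref{thm:connected}. Where you genuinely diverge is in how the connectivity criterion is established. The paper invokes the component-count formula of Chou and Shparlinski, $N(e,p)=\sum_{d\mid\rho}\varphi(d)/\mathop{\rm ord}_d e$, and reads off that $N(e,p)=1$ iff $\rho=1$, i.e., iff $\operatorname{rad}(p-1)\mid e$; you instead prove the same equivalence elementarily via the discrete logarithm: multiplication by $e$ on $\mathbb{Z}/(p-1)\mathbb{Z}\cong\mathbb{Z}/A\mathbb{Z}\times\mathbb{Z}/B\mathbb{Z}$ (with $A$ supported on the primes dividing $e$ and $\gcd(B,e)=1$) eventually annihilates the $A$-part and permutes the $B$-part, so cycles of the functional graph are in bijection with orbits on $\mathbb{Z}/B\mathbb{Z}$, and there is a single cycle iff $B=1$. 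Your necessity arguments for conditions (1) and (2) are also more explicit than the paper's informal counting remark (that the power map preserves units and nonunits and a maximal solution needs $d-1$ residues): you split composite $d$ into the non-squarefree case, where closure under rule (c) applied to $n_0=d/q$ forces a multiple of $d$ into $S$, and the squarefree case, where CRT produces two distinct fixed points and hence two weak components. What the paper's route buys is brevity and a connection to the general component-counting literature; what yours buys is a self-contained argument that does not rest on the cited theorem, plus an honest treatment of the lifting step (from residue-level connectivity to actual integer paths) that the paper only gestures at through Lemma~\ref{lem:specialcase} and its ``$1$-belt'' discussion. One cosmetic point: the permutation induced on the $B$-part need not be nontrivial (e.g., when $e\equiv 1\pmod{B}$), but your conclusion is unaffected, since the orbit $\{0\}$ together with any orbit of a nonzero element already yields at least two cycles whenever $B>1$.
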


The careful reader will correctly suspect that the role played by $p-1$ in condition (3) is owed to Fermat's little theorem (F$\ell$T). Indeed, for exponent (a multiple of) $p-1$ itself the proof is readily adapted from \cite{putnam_2017} and follows immediately from F$\ell$T.

\begin{lemma}\label{lem:specialcase}
    Let $p$ be an odd prime and $r\geq 2$ a non-multiple of $p$. Then Problem $A1(p,p-1,r)$ has a maximal solution.
\end{lemma}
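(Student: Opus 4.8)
The plan is to establish the two inclusions of $S = \{n \geq 2 \colon n \not\equiv 0 \pmod p\}$ separately. For the inclusion $S \subseteq \{n \geq 2 \colon n \not\equiv 0 \pmod p\}$, I would verify that the set $U = \{n \geq 2 \colon n \not\equiv 0 \pmod p\}$ itself satisfies conditions (a)--(c) of Definition \ref{def:generalized_a1}, so that minimality of $S$ forces $S \subseteq U$. Condition (a) holds since $r \in U$ by hypothesis. Condition (b) holds because if $n^{p-1} \in U$ then $n^{p-1} \not\equiv 0 \pmod p$ forces $n \not\equiv 0 \pmod p$ (as $p$ is prime), and $n^{p-1} \geq 2$ forces $n \geq 2$ (since $1^{p-1} = 1 \notin U$). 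Condition (c) holds because $(n+p) \equiv n \not\equiv 0 \pmod p$ gives $(n+p)^{p-1} \not\equiv 0 \pmod p$, and clearly $(n+p)^{p-1} \geq 2$. This inclusion already shows that neither $1$ nor any multiple of $p$ can appear in $S$.

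For the reverse inclusion---that every $t \geq 2$ with $t \not\equiv 0 \pmod p$ lies in $S$---I would first record the derived \emph{belt} rule: for any $n \in S$, rule (c) gives $(n+p)^{p-1} \in S$ and then rule (b) gives $n + p \in S$, so that $n \in S \implies n+p \in S$. Applying rule (c) once to $r$ produces $s_0 = (r+p)^{p-1} \in S$, and by Fermat's little theorem $s_0 \equiv 1 \pmod p$ since $r \not\equiv 0 \pmod p$. Iterating the belt rule from $s_0$ then places every integer congruent to $1$ modulo $p$ that is at least $s_0$ into $S$.

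The crux is reaching the remaining residue classes and the small targets, which I would handle as follows. Fix any $t \geq 2$ with $t \not\equiv 0 \pmod p$. Because $p$ is odd we have $p-1 \geq 2$, so the powers $t^{(p-1)^k}$ grow without bound; choose $k$ large enough that $t^{(p-1)^k} \geq s_0$. By Fermat's little theorem $t^{(p-1)^k} \equiv 1 \pmod p$ for every $k \geq 1$, so this large power is one of the class-$[1]$ integers already shown to lie in $S$. Applying rule (b) exactly $k$ times---descending through $t^{(p-1)^{k-1}}, t^{(p-1)^{k-2}}, \ldots, t^{p-1}$ and finally to $t^{(p-1)^0}=t$---then yields $t \in S$. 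Combining this with the first inclusion gives $S = \{n \geq 2 \colon n \not\equiv 0 \pmod p\}$, which is the assertion of maximality.

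The step I expect to be the main obstacle is this last one, since the belt rule by itself produces only \emph{large} elements of the residue class $[1]$, and a small target $t$ with $t^{p-1} < s_0$ is not directly reachable. The resolution is the observation that one may inflate $t$ to an arbitrarily large power $t^{(p-1)^k}$ that remains congruent to $1$ modulo $p$, and then recover $t$ itself by iterated applications of the inverse-power rule (b). This is precisely the point at which oddness of $p$ is needed: it guarantees $p-1 \geq 2$, so that the exponents $(p-1)^k$ genuinely grow and eventually push $t^{(p-1)^k}$ past the threshold $s_0$.
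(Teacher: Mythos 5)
Your proposal is correct and follows essentially the same route as the paper's own proof: both establish $S \subseteq \{n \geq 2 : n \not\equiv 0 \pmod p\}$ by minimality, derive the belt rule $n \in S \Rightarrow n+p \in S$ from rules (b) and (c), use Fermat's little theorem to place $(r+p)^{p-1}$ and its translates by multiples of $p$ into $S$, and then reach an arbitrary target $t$ by inflating it to a sufficiently large power $t^{(p-1)^k} \equiv 1 \pmod p$ and descending via rule (b). Your write-up is in fact slightly more explicit than the paper's (e.g., verifying conditions (a)--(c) for the candidate maximal set, and noting where oddness of $p$ is used), but the underlying argument is identical.
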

\begin{proof}
    Let $S = S(p,p-1,r)$ be the solution of Problem $A1(p,p-1,r)$. Noting that $\{n\geq 2 \colon n\not\equiv 0\pmod{p}\}$ meets all the conditions of Definition \ref{def:generalized_a1}, we must show that none of its proper subsets do.
    
    Conditions (b) and (c) in Definition \ref{def:generalized_a1} together imply that $S$ is closed under $n \mapsto n + p$. By F$\ell$T we have $R=(r+p)^{p-1}\equiv 1\pmod{p}$ and therefore $R+kp \in S$ for each $k\geq 0$.

    For arbitrary $t \geq 2$ a non-multiple of $p$, we have $t^{p-1} \equiv 1 \pmod{p}$ by F$\ell$T and hence $t^{(p-1)^i} \equiv 1 \pmod{p}$ for each $i\geq 1$. Since this sequence is unbounded above, there exists $m\geq 1,k\geq 0$ for which $t^{(p-1)^m} = R+kp \in S$ and repeated application of Condition (2) thus shows $t \in S$, completing the proof.\end{proof}

In the ``chutes and ladders'' perspective, the maximal solutions in this special case are relatively uninteresting. We henceforth define the generalized directed edges $U(x) = (x+d)^e$ and $D(x) = x^{1/e}$ and observe that F$\ell$T implies the $(p-1)$-th power map modulo $p$ is a star centered at $1$. So it is quickly shown that any two $x,y$ non-multiples of $p$ are connected by a $\{U,D\}$-path since both are at most one $U$-step away from the $1$-belt.

So Theorem \ref{thm:generalization} can be seen as capturing the necessary and sufficient conditions under which every nonzero residue modulo $d$ has a path to the $1$-belt. Conditions (1) and (2) are immediate: the $e$-th power map setwise preserves the $\varphi(d)$-element multiplicative group of units modulo $d$, as well as the $(d-\varphi(d))$-element set of nonunits and a maximal solution makes use of $(d-1)$ residues. That is only possible for prime $d$ (having $\varphi(d)=d-1$ units) and unit $r$ (so that the set $S$ includes the units rather than the nonunits).

The role of Condition (3), then, is to improve upon the ``uninteresting'' case to discover circumstances under which powers \emph{less} than the $(p-1)$-st have connected graphs modulo $p$. The key result derives from Chou and Shparlinski's formula for counting connected components of the $e$-th power map modulo $p$:

\begin{theorem}[\cite{ChouShparlinski}, Thm. 1]
    Write $e = p_1^{n_1} \cdots p_s^{n_s} \geq 2$ and $p-1 = p_1^{r_1}\cdots p_s^{r_s}\, \rho$ where $p_1,\ldots,p_s$ are distinct primes, $n_1,\ldots,n_s \geq 1$ and $r_1,\ldots,r_s \geq 0$ are integers and $\gcd( p_1\cdots p_s,\rho)= 1$. Then the number $N(e,p)$ of connected components in the graph of the $e$-th power map modulo $p$ is
    \begin{equation}\label{eq:connectedcomponents}
    N(e,p) = \sum_{d\mid \rho} \frac{\varphi(d)}{\mathop{\rm ord}_d e}
    \end{equation}
    where $\mathop{\rm ord}_d e$ is the multiplicative order of $e$ modulo $d$.
\end{theorem}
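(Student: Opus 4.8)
The plan is to pass from the $e$-th power map to an additive model, reduce the count of connected components to a count of cycles, and then evaluate that cycle count via the Chinese Remainder Theorem. First I would fix a primitive root $g$ modulo $p$ and use the isomorphism $\mathbb{F}_p^\times \cong \mathbb{Z}/(p-1)\mathbb{Z}$ sending $g^a \mapsto a$. Under this identification the $e$-th power map $x \mapsto x^e$ on $\mathbb{F}_p^\times$ becomes multiplication by $e$, namely $a \mapsto ea$, on the additive group $\mathbb{Z}/(p-1)\mathbb{Z}$. (As with the paper's graph $G_5$, I take the graph to have vertex set $\mathbb{F}_p^\times$, the nonzero residues, which is why no component for $0$ appears in the formula.) Thus it suffices to count the connected components of the functional graph of the map $T\colon a \mapsto ea$ on $\mathbb{Z}/(p-1)\mathbb{Z}$.

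Next I would invoke the standard structural fact about functional graphs (those in which every vertex has out-degree $1$): each weakly connected component contains exactly one cycle, and every vertex flows, under forward iteration, into the unique cycle of its component. Consequently two vertices lie in the same component if and only if their forward $T$-orbits eventually reach a common cycle, so the number of components equals the number of cycles (periodic orbits) of $T$. This replaces the topological question of connectivity with a purely combinatorial count.

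To count cycles I would apply CRT. Writing $p-1 = m\rho$ with $m = p_1^{r_1}\cdots p_s^{r_s}$ (the $p_i$ being exactly the primes dividing $e$) and $\gcd(m,\rho)=1$, the group splits as $\mathbb{Z}/(p-1)\mathbb{Z} \cong \mathbb{Z}/m\mathbb{Z} \times \mathbb{Z}/\rho\mathbb{Z}$, and $T$ acts coordinatewise as multiplication by $e$ on each factor. On the $m$-factor every prime dividing $m$ divides $e$, so $e^k \equiv 0 \pmod{m}$ once $k \geq \max_i r_i$; hence $T$ is eventually the zero map there, and $0$ is its unique periodic point. On the $\rho$-factor, $\gcd(e,\rho)=1$ makes $T$ a permutation, so every element is periodic. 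Combining, the forward orbit of $(a,b)$ eventually reaches the cycle $\{0\} \times \{\, e^k b : k \geq 0 \,\}$, whose identity depends only on the $T$-cycle of $b$ in $\mathbb{Z}/\rho\mathbb{Z}$. Therefore the number of components equals the number of cycles of multiplication by $e$ on $\mathbb{Z}/\rho\mathbb{Z}$.

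Finally I would count those cycles by sorting elements according to additive order. Since $e$ is a unit modulo $\rho$, multiplication by $e$ preserves additive order, hence permutes, for each $d\mid\rho$, the set of $\varphi(d)$ elements of order $d$. Identifying $(\rho/d)u$ with $u \in (\mathbb{Z}/d\mathbb{Z})^\times$, the map $T$ acts as multiplication by $e \bmod d$, whose orbits are exactly the cosets of the cyclic subgroup $\langle e \rangle \leq (\mathbb{Z}/d\mathbb{Z})^\times$; there are $\varphi(d)/\operatorname{ord}_d e$ of them. Summing over $d\mid\rho$ yields $N(e,p)=\sum_{d\mid\rho}\varphi(d)/\operatorname{ord}_d e$, as claimed. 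I expect the main obstacle to be the bookkeeping in the CRT step: one must verify carefully that the collapse of the entire $m$-factor onto its single fixed point causes components to merge exactly along $\rho$-cycles, neither splitting a $\rho$-cycle into several components nor merging distinct $\rho$-cycles, so that the final cycle count is governed solely by the coprime part $\rho$.
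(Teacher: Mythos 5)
The paper offers no proof of this statement at all: it is imported verbatim as Theorem 1 of Chou and Shparlinski and used only through its corollary that $N(e,p)=1$ if and only if $\rho=1$. Your argument is a correct, self-contained proof, and it is essentially the canonical one for such results: the discrete logarithm turns $x\mapsto x^e$ on $\mathbb{F}_p^\times$ into $T\colon a\mapsto ea$ on $\mathbb{Z}/(p-1)\mathbb{Z}$; components of a finite functional graph biject with its cycles; CRT splits $\mathbb{Z}/(p-1)\mathbb{Z}\cong\mathbb{Z}/m\mathbb{Z}\times\mathbb{Z}/\rho\mathbb{Z}$ with $T$ eventually zero on the first factor (indeed $e^k\equiv 0\pmod m$ already for $k\geq\max_i\lceil r_i/n_i\rceil$) and a permutation on the second; and stratifying $\mathbb{Z}/\rho\mathbb{Z}$ by additive order $d\mid\rho$ identifies the cycles among the $\varphi(d)$ elements of order $d$ with the cosets of $\langle e\rangle\leq(\mathbb{Z}/d\mathbb{Z})^\times$, of which there are $\varphi(d)/\operatorname{ord}_d e$. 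The ``bookkeeping'' worry you raise at the end is in fact already resolved by your own analysis: since the set of periodic points is precisely $\{0\}\times\mathbb{Z}/\rho\mathbb{Z}$ and $T$ restricted there is the permutation $b\mapsto eb$, the cycles of $T$ are exactly the sets $\{0\}\times C$ for $\rho$-cycles $C$, so nothing merges or splits and the component count is governed solely by $\rho$. Two points worth stating explicitly rather than parenthetically: the vertex-set convention matters, since including $0$ would add one isolated fixed-point component and change the formula by $1$ (your choice of $\mathbb{F}_p^\times$ is the one consistent with the paper's $G_5$ and with the corollary $N(e,p)=1$ when $\rho=1$); and $\operatorname{ord}_d e$ is well defined because every prime factor of $e$ divides $p_1\cdots p_s$, which is coprime to $\rho$, hence $\gcd(e,d)=1$ for each $d\mid\rho$. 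With those made explicit, your proof is complete, and it gives the reader something the paper itself does not.
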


It follows immediately from this result that the graph of the $e$-th power map modulo $p$ is connected, i.e. $N(e,p)=1$, if and only if $\rho = 1$. This holds precisely when all of the prime factors of $p-1$ are also prime factors of $e$, in other words, when $e$ is a multiple of the radical $p_1\cdots p_s$ of $(p-1)$.

This completes the verification of Theorem \ref{thm:generalization}. From this result we have the following corollaries.

\begin{corollary}
    The graph of the $e$-th power map modulo $p$ is connected when there exists $k\geq 1$ such that $p = e^k+1$. 
    
    If $e = q^n$ is a prime power, this graph is connected if and only if there exists $k\geq 1$ such that $p = q^k+1$.

    In particular, the squaring map modulo $p$ is connected if and only if $p = 2^k+1$ is a Fermat prime.
\end{corollary}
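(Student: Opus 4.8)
The plan is to read off all three claims directly from the connectedness criterion established just before the corollary: the graph of the $e$-th power map modulo $p$ is connected if and only if $N(e,p)=1$, which by the Chou--Shparlinski formula holds precisely when $\rho=1$, i.e. when every prime factor of $p-1$ also divides $e$ (equivalently, when the radical of $p-1$ divides $e$). Each part of the corollary is then just a translation of this divisibility condition into the stated arithmetic form.

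For the first claim, I would observe that if $p = e^k+1$ for some $k\geq 1$, then $p-1 = e^k$ has exactly the same set of prime divisors as $e$ itself. Hence the radical of $p-1$ divides $e$, so $\rho=1$ and the graph is connected. I would flag that this yields only a sufficient condition, since for composite $e$ the integers $p-1$ all of whose prime factors divide $e$ need not be pure powers of $e$, so the implication is genuinely one-directional here.

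For the second claim, I would specialize to $e = q^n$, whose only prime factor is $q$. The condition ``every prime factor of $p-1$ divides $e$'' then reads that $q$ is the sole prime dividing $p-1$, i.e. that $p-1$ is a power of $q$; equivalently $p = q^k+1$ for some $k\geq 1$. Since the underlying criterion is an equivalence, both directions follow simultaneously, upgrading the one-way implication of the first claim to a biconditional in the prime-power case.

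The third claim is the instance $e = 2 = 2^1$, for which the second claim gives connectedness if and only if $p = 2^k+1$. The only additional input needed is the classical fact that a prime of the form $2^k+1$ forces $k$ to be a power of $2$ (an odd factor of $k$ would otherwise produce an algebraic factorization of $2^k+1$), so that $p$ is a Fermat prime $2^{2^m}+1$. I expect no genuine obstacle in this argument: the whole content is the translation between the divisibility condition $\rho=1$ and the stated forms, and the only places deserving care are the classical Fermat-prime fact in the final claim and attention to the degenerate edge case $p=2$ (where $p-1=1$ and the single-vertex graph is vacuously connected).
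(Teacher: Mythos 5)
Your proposal is correct and follows essentially the same route as the paper: both arguments simply translate the Chou--Shparlinski criterion ($N(e,p)=1$ iff $\rho=1$, i.e., iff the radical of $p-1$ divides $e$) into the three stated arithmetic forms, with the prime-power case yielding the biconditional and $e=q=2$ giving the Fermat-prime statement. Your added remarks (the one-directionality of the first claim, the classical fact that $2^k+1$ prime forces $k$ to be a power of two) are sound refinements of details the paper leaves implicit.
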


\begin{proof}
    In the case where $p-1 = e^k$ it follows from unique factorization that $e$ is a multiple of the radical of $(p-1)$, meeting the criteria of Theorem \ref{thm:generalization}.

    Furthermore when $e = q^n$ for prime $q$ and $n\geq 1$, it follows that $\rho = 1$ if and only if $p-1$ is also a power of $q$, establishing the second claim. Setting $e=q=2$ we have proven the third claim for the squaring map.
\end{proof}

This result suggests that maximal solutions of Problem $A1(p,e,r)$ are in fact quite rare. There are only five known Fermat primes, for example, the finitude of which remains an open question. The prime-power-exponent case of Problem $A1(p^k+1,p^n,r)$ has maximal solutions when $p^k+1$ is a prime; this condition requires in particular that $k$ be a power of two, and hence the prime moduli which may satisfy this condition are a subsequence of the primes one more than a perfect square. Such primes are necessarily found, therefore, within \seqnum{A002496}.

 This corollary settles the question of ``interesting'' maximal solutions to Problem A1 for the case when $p-1$ is a prime power.

\begin{corollary}
    Let $p,q$ be primes such that $p = q^k+1$ for some $k\geq 1$. Then for any $n\geq 1$ and $r\geq 2$, $r\not\equiv 0 \pmod{p}$, the solution to Problem $A1(p,q^n,r)$ is maximal. In particular, in this case there exists a non-star-shaped connected power-map graph modulo $p$ (that is, an ``interesting'' maximal solution).
\end{corollary}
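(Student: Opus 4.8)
The plan is to derive both assertions from results already in hand, with the maximality being essentially immediate and the ``non-star'' claim requiring only a short orbit computation.

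First I would establish maximality by verifying the three conditions of Theorem \ref{thm:generalization} for the triple $(d,e,r) = (p, q^n, r)$. Condition (1) holds since $p$ is prime by hypothesis, and condition (2) holds since $r \not\equiv 0 \pmod p$ by hypothesis. The only condition requiring comment is (3): since $p - 1 = q^k$ with $q$ prime, the radical of $p-1$ is exactly $q$, and $e = q^n$ with $n \geq 1$ is plainly divisible by $q$. Thus all three conditions hold and Theorem \ref{thm:generalization} yields that $S(p, q^n, r)$ is maximal for every $n \geq 1$.

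Next I would address the ``in particular'' claim. By the preceding corollary (equivalently, the Chou--Shparlinski count with $\rho = 1$), the graph of the $q^n$-power map modulo $p$ is connected, so it remains to exhibit a choice of $n$ for which this connected graph is not a star. Fixing a primitive root $g$ modulo $p$, I would identify $(\mathbb{Z}/p\mathbb{Z})^\times$ with the additive group $\mathbb{Z}/q^k\mathbb{Z}$ via $g^a \leftrightarrow a$; under this identification the map $x \mapsto x^{q^n}$ becomes the multiplication-by-$q^n$ map $a \mapsto q^n a \pmod{q^k}$, whose unique fixed point is $0$ (the residue $1$). The graph is a star centered at $1$ precisely when every $a$ satisfies $q^n a \equiv 0 \pmod{q^k}$, which forces $n \geq k$; so for any $n$ with $1 \leq n < k$ the orbit of a generator (an $a$ coprime to $q$) reaches $0$ only after $\lceil k/n \rceil \geq 2$ steps, placing such vertices at graph-distance at least $2$ from the center and hence producing a connected graph that is not a star.

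The only genuine obstacle is that a non-star graph exists only when $k \geq 2$; the sole excluded case is $k = 1$, which by consecutive-prime considerations forces $(p,q) = (3,2)$, for which every $q^n$-power map modulo $3$ collapses to the trivial star. I would therefore phrase the ``interesting'' conclusion as holding whenever $k \geq 2$, taking $n = 1$ as the canonical witness: here $e = q$, the map is $a \mapsto q a \pmod{q^k}$, and a generator sits at distance exactly $k \geq 2$ from $1$. The case $p = 5, q = 2, k = 2, n = 1$ recovers precisely the squaring graph $G_5$ of Figure \ref{fig:graph}, whose depth-two tree structure is the prototype of the ``interesting'' non-star-shaped maximal solution.
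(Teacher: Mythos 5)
Your maximality argument coincides with the paper's (the paper leaves it implicit, as a direct appeal to Theorem \ref{thm:generalization}): conditions (1) and (2) are hypotheses, and the radical of $p-1 = q^k$ is $q$, which divides $q^n$. Where you genuinely diverge is the ``in particular'' clause, and there your treatment is more careful than the paper's own. The paper's proof asserts that primality of $p = q^k+1$ forces $k$ to be even, hence $k\geq 2$, hence $q < p-1$, so that the $q$-th power map modulo $p$ is not a star. But that parity claim is false in exactly one case: $k=1$, $q=2$, $p=3$ (note $3 = 2^1+1$ is prime, and $1$ is odd). You flagged this case explicitly, observed that modulo $3$ every $2^n$-power map collapses to a star, and concluded that the non-star conclusion holds only for $k\geq 2$ --- which means the corollary as stated (allowing $k\geq 1$) is slightly too strong in its second clause, and your restriction is the correct repair. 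Your discrete-logarithm argument is also sharper than the paper's: identifying the unit group with $\mathbb{Z}/q^k\mathbb{Z}$ turns the $q^n$-power map into multiplication by $q^n$, which is a star if and only if $n\geq k$, so you determine the exact threshold $n<k$ for non-star behavior rather than only exhibiting the single exponent $e=q$. Both routes yield the same conclusion when $k\geq 2$; yours additionally pins down the failure at $(p,q,k)=(3,2,1)$, which the paper's proof overlooks.
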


\begin{proof}
    Observe that since $p$ is prime, $p = q^k+1$ in fact necessitates that the exponent $k$ be even. In particular we must have $k\geq 2$, so that $q < p-1$. The graph of the $q$-th power map modulo $p$ is therefore not star-shaped, completing the proof.
\end{proof}

This gives a generalization of the Fermat primes and provides a perhaps-limited number of examples of ``interesting'' maximal solutions when $p-1$ is a prime power. At the other end of the factorization spectrum, consider the case where $p-1$ is squarefree. Here, the only maximal solutions are ``uninteresting'':

\begin{corollary}
    Let $p$ be a prime. There exists $2 \leq e \leq p-2$ for which Problem $A1(p,e,r)$ has a maximal solution if, and only if, $p-1$ is not squarefree.
\end{corollary}

\begin{proof}
    Choose $r\geq 2$, $r\not\equiv 0\pmod{p}$ arbitrarily.
    
    Assume $p-1 = p_1\cdots p_s$ is squarefree, with $p_1,\ldots,p_s$ distinct primes. Since $p-1$ is equal to its radical, by Theorem \ref{thm:generalization} Problem $A1(p,e,r)$ has a maximal solution only when $p_1\cdots p_s = p-1 \mid e$. This rules out maximal solutions for exponents $e \leq p-2$.

    In the case where $p-1$ is not squarefree, let $q$ be a prime for which $q^2 \mid p-1$. Then we may set $e = (p-1)/q \leq p-2$ and every prime factor of $p-1$ is a prime factor of $e$. Hence $\rho=1$ and the solution to Problem $A1(p,e,r)$ is maximal.
\end{proof}

This result shows that interesting maximal solutions exist for, in fact, a majority of prime moduli. Mirsky \cite{Mirsky} has shown that the asymptotic proportion of primes $p$ for which $p-1$ is squarefree is equal to Artin's constant $A \approx 0.374$. These primes, for which all maximal solutions are uninteresting, may also be found as OEIS sequence \seqnum{A039787}.

\subsection{Generalizing the Cutoff Algorithm}

We conclude by showing how to adapt the upper bound of Corollary \ref{cor:allbound} to facilitate a cutoff for determining first appearances/shortest paths in the generalized case.

The row-wise algorithm to output the entries of $S(d,e,r)$ carries over from Definition \ref{def:putnamtriangle}:

\begin{definition}
    For each $i\geq 1$, define $R_i = R_i(d,e,r)$ to be the set of integers defined recursively by
    \begin{itemize}
        \item $R_1 = \{r\}$.
        \item For each $k\geq 1$, if $x \in R_k$ then $(x+d)^e \in R_{k+1}$.
        \item For each $k\geq 1$, if $x^e \in R_k$ then $x \in R_{k+1}$.
    \end{itemize}
\end{definition}

Following the methods of Section \ref{sec:mainresult} we can determine an upper bound on the vertices visited along paths of known starting point, ending point, and length for this generalized problem. This bound can then serve as a cutoff to enable efficient computation of the first appearance of a given number by outputting these rows.

Let $\Gamma(p,e)$ be the directed graph whose vertices are  $V = \cup_{r\geq 2} S(p,e,r)$ and whose $U$- and $D$-edges are given by
\begin{align*}
    U &= \bigl\{ \bigl( n, (n+d)^e \bigr) \colon n \in V \bigr\}\\
    D &= \bigl\{ \bigl( n^e, n \bigr) \colon n \in V \bigr\}.
\end{align*}

The key element in the generalized setting is, as it was in the special case, the prohibition against $UUDD$ subpaths in $\Gamma(p,e)$, analogously to Theorem \ref{thm:uudd}.

\begin{theorem}
    No path in $\Gamma(p,e)$ contains $UUDD$ as a subpath.
\end{theorem}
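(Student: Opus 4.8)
The plan is to mirror the structure of the proof of Theorem~\ref{thm:uudd} almost verbatim, replacing the specific exponent $2$ with the general exponent $e$ and the shift $5$ with the prime $p$, and then to isolate exactly where the argument depends on the particular arithmetic of the original case. Suppose to the contrary that a path $UUDD$ originates at some $x \in V$, with vertices
\begin{equation*}
    x \stackrel{U}\to a \stackrel{U}\to z \stackrel{D}\to b \stackrel{D}\to y.
\end{equation*}
As in the original, I would characterize $z$ in two ways: climbing via the two $U$-edges gives $z = \bigl((x+p)^e + p\bigr)^e$, while descending via the two $D$-edges gives $z = y^{e^2}$. Equating and taking $e$-th roots (all quantities positive) yields the clean relation
\begin{equation*}
    y^e - (x+p)^e = p,
\end{equation*}
the direct analogue of the equation $y^2 - (x+5)^2 = 5$ in the special case.

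From here the strategy is the same separation-of-powers idea: the left-hand side is a difference of two $e$-th powers that is forced to equal the small number $p$, and I want to show this is impossible once $x$ and the exponent are at least as large as the problem allows. First, $y^e - (x+p)^e = p > 0$ forces $y > x+p$, hence $y \geq x+p+1$. Then I would bound the gap between consecutive $e$-th powers from below:
\begin{equation*}
    y^e - (x+p)^e \;\geq\; (x+p+1)^e - (x+p)^e.
\end{equation*}
Setting $m = x+p$, the mean value theorem (or the binomial expansion) gives $(m+1)^e - m^e \geq e\,m^{e-1} \geq e\,m$ for $e \geq 2$ and $m \geq 1$. Since $x \geq 2$ and $p \geq 2$ we have $m = x+p \geq 4$, so the gap is at least $e \cdot 4 \geq 8 > p$ only when $p$ is small; the genuine content is showing $e\,m^{e-1} > p$ in general, which I would get from $m = x+p > p$ and $e \geq 2$, so that already $e\,m^{e-1} \geq 2(x+p) > p$. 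This contradicts $y^e - (x+p)^e = p$, completing the argument.

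\textbf{Where the obstacle lies.} The delicate point is that, unlike the original Theorem~\ref{thm:uudd} where the shift and exponent were fixed numerically, here the inequality must hold uniformly for \emph{all} admissible $d=p \geq 2$ and $e \geq 2$, so I must be careful that the lower bound on the gap $(m+1)^e - m^e$ genuinely dominates $p$ for every such pair rather than relying on a numerical coincidence (as the original proof did when it wrote $2x+11 > 12$). The cleanest route is to observe $(m+1)^e - m^e > m^{e-1} \geq m = x+p > p$ whenever $e \geq 2$ and $m \geq 1$, which avoids any case analysis and makes the contradiction transparent. I would also note that the theorem statement writes $\Gamma(p,e)$ with shift $d$; assuming the intended reading $d=p$, the argument above applies directly, and no hypothesis on $r$, on primality of $p$, or on Condition~(3) of Theorem~\ref{thm:generalization} is needed—the $UUDD$ prohibition is a purely metric fact about the spacing of $e$-th powers and holds in full generality.
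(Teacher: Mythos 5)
Your proof is correct and takes essentially the same route as the paper's: both derive $y^e - (x+d)^e = d$ by equating the two characterizations of the top vertex and taking $e$-th roots, then contradict it with the lower bound $(x+d+1)^e - (x+d)^e \geq e(x+d)^{e-1} \geq 2(x+d) > d$ on the gap between consecutive $e$-th powers. Your closing remark that no primality or other structural hypotheses are needed matches the paper, whose argument likewise uses only $d>0$, $x>0$, and $e\geq 2$.
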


\begin{proof}
    Suppose to the contrary that there exist $x,y,z\in V$ and a path \begin{equation*} x \stackrel{U}\to (x+d)^e \stackrel{U}\to z \stackrel{D}\to y^e \stackrel{D}\to y.\end{equation*}

    Then it follows that $y^e - (x+d)^e = d$, and since $d$ is positive we have $y > x+d$, and $y^e \geq (x+d+1)^e$. However the latter inequality implies that
    \begin{align*}
        y^e-(x+d)^e &\geq (x+d+1)^e - (x+d)^e\\ &= e(x+d)^{e-1}+\cdots+1 > d,
    \end{align*}
    a contradiction.    
\end{proof}

As before, this limitation on paths enables bounding of the maximum vertex visited, provided there are at least two steps before and after it is attained. 

\begin{corollary}
    Let $P$ be a path in $\Gamma(p,e)$ that begins at $x$, ends at $y$, and has length no more than $2\ell$. If $M$ is the largest vertex visited along $P$, then
    \begin{equation*}
        M \leq \max\left\{\left[1+\left(\frac{d\ell}{e}\right)^{\frac{1}{e-1}} \right]^{e^2},\left(\frac{d\ell}{1-2^{1-e}}\right)^e,(x+d)^e,y^e\right\}.
    \end{equation*}
\end{corollary}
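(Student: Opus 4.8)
The plan is to mimic the proof of Theorem~\ref{thm:howmax} verbatim, replacing the exponent $2$ with $e$ and the additive constant $5$ with $d$, then reconciling the resulting three-way bound. I would first reduce to the case where $P$ has at least two vertices on each side of its maximum $M$; the remaining cases where $M$ sits at (or one step from) an endpoint contribute the terms $(x+d)^e$ and $y^e$ exactly as in Corollary~\ref{cor:allbound}. So the heart of the matter is establishing the first two terms under the ``two-vertices-on-each-side'' hypothesis.

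Under that hypothesis I would run the same local analysis: maximality of $M$ forces the step into $M$ to be an up-step and the step out to be a down-step, and then a second down-step (so $z=M^{1/e^2}$), giving $M=a^{e^2}$ for $a=M^{1/e^2}$, with the generalized $UUDD$-prohibition ruling out an up-step into the vertex preceding $M$. This pins down the picture $\to (a^e-d)^e \stackrel{D}\to a^e-d \stackrel{U}\to a^{e^2} \stackrel{D}\to a^e \stackrel{D}\to a \to$, so $M$ again lies at the penultimate vertex of a conveyor belt ending at $a^e$. Splitting into Case~1 (square-to-square belt) and Case~2 (not square-to-square) then yields two separate bounds on $a$ in terms of $\ell$.

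In Case~1 the belt has at least as many $(UD)$-steps as the belt starting at the adjacent lesser $e$-th power $(a-1)^e$; that belt has $\tfrac{2}{d}\bigl((a-1+\text{stuff})\bigr)$ steps, but more cleanly $\tfrac{1}{d}\bigl(a^e-(a-1)^e\bigr)$ advancing $UD$-steps, each advancing by $d$. Bounding $a^e-(a-1)^e \ge e(a-1)^{e-1}$ from below and setting the step count $\le 2\ell$ should give $a-1 \le (d\ell/e)^{1/(e-1)}$, hence $M=a^{e^2}\le \bigl[1+(d\ell/e)^{1/(e-1)}\bigr]^{e^2}$, matching the first term. In Case~2 the belt begins at a non-$e$th-power $a^e-di$, preceded by two down-steps from $(a^e-di)^e$; the constraint that $(a^e-di)^e$ not exceed $M=a^{e^2}$ forces a lower bound on $i$, and hence on the belt length, in terms of $a$. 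Translating ``belt length $\le 2\ell$'' back through this gives the second term $\bigl(d\ell/(1-2^{1-e})\bigr)^e$; the factor $1-2^{1-e}$ will arise from controlling the worst case $i\approx a^e/(2d)$ where $(a^e-di)^e$ first threatens to surpass $a^{e^2}$.

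The main obstacle will be Case~2: in the original proof the analogous step used the clean identity $(a^2-5i)^2 = a^4-10ia^2+25i^2 > a^2(a^2-10i)$, which does not generalize term-by-term for higher $e$. I would instead argue that if the belt were too short, i.e. $di$ too small relative to $a^e$, then $(a^e-di)^e$ would exceed $a^{e^2}=M$; quantitatively, $(a^e-di)^e > M$ is equivalent to $a^e - di > a^e \cdot a^{-\,?}$, and the binomial/convexity estimate $(a^e-di)^e \ge (a^e)^e\bigl(1-e\,di/a^e\bigr)$ is too weak, so I expect the genuine work is choosing the right inequality (likely $(1-t)^e \ge 1 - \text{(something)}$ or a direct comparison at $t=di/a^e$) that produces exactly the $1-2^{1-e}$ denominator rather than a cruder constant. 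The final reconciliation—verifying that the two case-bounds are both dominated by the stated maximum for all admissible $\ell$, as was done by the induction $\tfrac14(5\ell+1)^2 \ge 20\ell+1$ in the special case—will be routine once the two per-case bounds are in hand, and I would relegate the small-$\ell$ edge cases to the same ``no loss of generality in enlarging $\ell$'' remark used there.
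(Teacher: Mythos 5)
Your overall architecture coincides with the paper's: the endpoint cases supply the terms $(x+d)^e$ and $y^e$, the generalized $UUDD$ prohibition forces the local picture $D,U,D,D$ around $M$ so that $M=a^{e^2}$ is the penultimate vertex of a conveyor belt ending at $a^e$, and the argument splits according to whether that belt contains a lesser $e$-th power. Your Case~1 also matches the paper: the belt must contain the belt from $(a-1)^e$ to $a^e$, which has $\tfrac{2}{d}\bigl(a^e-(a-1)^e\bigr)\geq\tfrac{2e}{d}(a-1)^{e-1}$ steps, and comparing with $2\ell$ gives the first term of the maximum.

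The genuine gap is in Case~2, and it is not merely a missing estimate: the constraint you propose to exploit is vacuous. You compare $(a^e-di)^e$ with $M=a^{e^2}=(a^e)^e$; since $di>0$, the inequality $(a^e-di)^e<(a^e)^e$ holds automatically for every $i\geq 1$, so it can never ``force a lower bound on $i$,'' and no convexity or binomial refinement of $(1-t)^e$ will change that. The vertex that carries the information is the one \emph{two} down-steps before the belt start, namely $(a^e-di)^{e^2}$, which your own picture should contain: $(a^e-di)^{e^2}\stackrel{D}\to(a^e-di)^e\stackrel{D}\to a^e-di$. Maximality of $M$ gives $(a^e-di)^{e^2}\leq a^{e^2}$, and here the paper's move is simpler than anything you anticipated: extract $e^2$-th roots to get $a^e-di\leq a$, hence $i\geq\tfrac1d(a^e-a)$. (This is also the honest generalization of Theorem \ref{thm:howmax}: the identity $(a^2-5i)^2=a^4-10ia^2+25i^2$ you were trying to mimic is an artifact of the $e=2$ write-up; root extraction replaces it.) Since the belt has $2i\leq 2\ell$ steps, this yields $\tfrac1d(a^e-a)\leq\ell$; then the elementary inequality $a^e-a\geq(1-2^{1-e})a^e$ for $a\geq 2$, which follows from the factorization $a^e-a-(1-2^{1-e})a^e=2^{1-e}a\,(a^{e-1}-2^{e-1})\geq 0$, gives $(1-2^{1-e})a^e\leq d\ell$ and so $M=(a^e)^e\leq\bigl(d\ell/(1-2^{1-e})\bigr)^e$. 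That factorization, with equality exactly at $a=2$, is the true source of the denominator $1-2^{1-e}$; it has nothing to do with a ``worst case $i\approx a^e/(2d)$.'' Two smaller points: the reconciliation step you anticipate at the end is unnecessary, since unlike Theorem \ref{thm:howmax} the present statement keeps both case-bounds as separate terms of the maximum; and note that both your sketch and the paper tacitly assume the path extends two steps before the belt start, an assumption you inherit rather than introduce.
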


\begin{proof}
That $M \leq (x+d)^e$ and $M\leq y^e$ follows the argument of Corollary \ref{cor:allbound} in the cases where $M$ is attained within the initial or final two steps of the path.

To obtain the remaining estimates, suppose that there are at least two steps both before and after $M$,

$$
w \to x \to M \to y \to z.
$$

The arguments in the proof of Theorem 12 justify why these steps must be, respectively,

$$
w\stackrel{D}\to x \stackrel{U}\to M \stackrel{D}\to y \stackrel{D}\to z
$$

and accordingly we have

$$
w \stackrel{D}\to z^e-d \stackrel{U}\to M=z^{e^2} \stackrel{D}\to z^e \stackrel{D}\to z.
$$

By Theorem \ref{thm:howmax} this path is the tail end of a $2i$-step conveyor belt $(UD)^i$ terminating at $z^e$.

\emph{Case 1:} This conveyor belt contains a lesser $e$-th power.
    
    In this case the belt must contain the belt from $(z-1)^e$ to $z^e$ as a subpath.
    
    That subpath consists of $\frac{2}{d}\bigl( z^e-(z-1)^e \bigr) = \frac{2}{d}\bigl(e(z-1)^{e-1}+\cdots+1)$ steps, and since this is bounded above by $2\ell$ we have in particular

    \begin{align*} \frac{2e}{d}(z-1)^{e-1} &\leq 2\ell\\ z &\leq1+\sqrt[e-1]{d\ell/e}\end{align*}

    from which we conclude $M \leq \bigl(1+\sqrt[e-1]{d\ell/e}\bigr)^{e^2} = O(\ell^e)$.
    
\emph{Case 2:} This conveyor belt does not contain a lesser $e$-th power.
    
    In this case the beginning of the belt $z^e - di$ in particular is not a perfect $e$-th power, so it must be preceded by two down-steps:
    
    $$
    (z^e-di)^{e^2} \stackrel{D}\to (z^e-di)^e \stackrel{D}\to z^e-di \stackrel{(UD)^{i-1}}\longrightarrow z^e-d \stackrel{U}\to M=z^{e^2}
    $$
    
    Since $M$ is maximal we must in particular have $(z^e-di)^{e^2} \leq z^{e^2}$, i.e., $z^e-di \leq z$. This imposes a lower bound on the length of the belt, $i \geq \frac{1}{d}(z^e-z)$, and hence we must have
    
    $$
    \frac1d(z^e-z)\leq \ell.
    $$
    
    However for $z \geq 2$ we have $z^e-z \geq (1-2^{1-e})z^e$. (To see why, note that 
        $z^e-z - (1-2^{1-e})z^e = 2^{1-e}z(z^{e-1}-2^{e-1}) \geq 0.$)

    So we are assured that

    \begin{align*} \frac1d(1-2^{1-e})z^e &\leq \ell\\M=z^{e^2} &\leq\left(\frac{d\ell}{1-2^{1-e}}\right)^e.\end{align*}
    
This exhausts all possibilities and completes the proof.    
\end{proof}

This bound generalizes that of Theorem \ref{thm:putnambound}. For example, by computing the first $100$ rows $R_i(17,4,2)$ and discarding all entries greater than $2357^4 \approx 3.1\times 10^{13}$, we can be assured that the appearance of $7$ on the $84$-th row is indeed its first appearance --- in other words, that the shortest path from $2$ to $7$ in the $\{U,D\}$ graph of the fourth-power map modulo $17$ has $83$ steps.

\bigskip
\hrule
\bigskip

\noindent 2020 {\it Mathematics Subject Classification}: Primary 11Y55, Secondary 11T30

\noindent \emph{Keywords:} squaring map, finite fields,  recreational mathematics

\bigskip
\hrule
\bigskip

\noindent
(Concerned with sequences
\seqnum{A296142}, \seqnum{A321350}, \seqnum{A321351}, and \seqnum{A366552}.)

\bigskip
\hrule
\bigskip

\vspace*{+.1in}
\noindent
Received ;
revised .
Published in {\it Journal of Integer Sequences}, .

\bigskip
\hrule
\bigskip

\bibliographystyle{plain}
\bibliography{refs}

\noindent
Return to
\vskip .1in

\end{document}